
\documentclass[12pt,letter, reqno]{amsart}
\usepackage{amsfonts}
\usepackage{amsthm}
\usepackage{amsmath}
\usepackage{amssymb} %
\usepackage{slashed}
\usepackage{amscd}
\usepackage[latin2]{inputenc}
\usepackage{t1enc}
\usepackage[mathscr]{eucal}
\usepackage{indentfirst}
\usepackage{graphicx}
\usepackage{graphics}
\usepackage{pict2e}
\usepackage{epic}
\numberwithin{equation}{section}
\usepackage[margin=2.9cm]{geometry}
\usepackage{epstopdf} 

\usepackage{tikz-cd}
\usepackage{dsfont}
\usetikzlibrary{matrix}

\newcommand\CC{{\mathbb C}} 
\newcommand\CP{{\mathbb C}\mathbb{P}} 
\newcommand\QQ{{\mathbb Q}} 
\newcommand\RR{{\mathbb R}} 
\newcommand\ZZ{{\mathbb Z}} 
 




\theoremstyle{plain}
\newtheorem{theorem}{Theorem}[section]
\newtheorem*{theorem*}{Theorem}
\newtheorem*{theoremA}{Theorem A}
\newtheorem*{theoremB}{Theorem B}
\newtheorem*{theoremC}{Theorem C}
\newtheorem{lemma}[theorem]{Lemma}

\newtheorem{corollary}[theorem]{Corollary}
\newtheorem{proposition}[theorem]{Proposition}

 \theoremstyle{definition}

\newtheorem{remark}[theorem]{Remark}
\newtheorem{?}[theorem]{Problem}
\newtheorem*{Question}{Question}
\newtheorem*{Answer}{Answer}

\newcommand{\im}{\operatorname{im}}

\begin{document}

\title[Connected sums of AC manifolds and products of $\QQ$-homology spheres]{Connected sums of almost complex manifolds, products of rational homology spheres, and the twisted spin${}^c$ Dirac operator}

\author{Michael Albanese and Aleksandar Milivojevi\'c}

\begin{abstract} We record an answer to the question "In which dimensions is the connected sum of two closed almost complex manifolds necessarily an almost complex manifold?". In the process of doing so, we are naturally led to ask "For which values of $\ell$ is the connected sum of $\ell$ closed almost complex manifolds necessarily an almost complex manifold?". We answer this question, along with its non-compact analogue, using obstruction theory and Yang's results on the existence of almost complex structures on $(n-1)$-connected $2n$-manifolds. Finally, we partially extend Datta and Subramanian's result on the nonexistence of almost complex structures on products of two even spheres to rational homology spheres by using the index of the twisted spin${}^c$ Dirac operator. \end{abstract}

\address{Stony Brook University \\ Department of Mathematics} 

\email{michael.albanese@stonybrook.edu}
\email{aleksandar.milivojevic@stonybrook.edu}

\maketitle

\section{Introduction}

\begin{Question}
In which dimensions is the connected sum\footnote{Note that the connected sum operation is only well-defined for connected manifolds; throughout the paper, all manifolds will be assumed to be connected.} of two closed almost complex manifolds necessarily an almost complex manifold? 
\end{Question}

The answer to this question seems to be known, but we take this opportunity to record it in detail.

In dimensions 2 and 6, we know that the connected sum of two (not necessarily closed) almost complex manifolds is almost complex. In dimension 2, every orientable manifold is almost complex, and for a smooth orientable six-manifold, the only obstruction to admitting an almost complex structure is the third integral Stiefel--Whitney class $W_3$ \cite{MasseyACS}, which is additive under connected sum. 

In dimensions $4m$, Hirzebruch used his $\chi_y$-genus to show that the Euler characteristic and signature of a closed almost complex manifold $M$ satisfy $\chi(M) \equiv (-1)^m \sigma(M) \bmod 4$ \cite[p.777]{Hirz}. Suppose we have two such manifolds $M$ and $N$ of dimension $4m$. Then $\chi(M\#N) = \chi(M) + \chi(N) - 2$ and $\sigma(M \# N) = \sigma(M) + \sigma(N)$, so we see that $\chi(M\#N) \not\equiv (-1)^m\sigma(M\#N) \bmod 4$. Therefore $M\# N$ never admits an almost complex structure. 

Unlike in the previous case, in dimensions of the form $4m + 2 > 6$, there are examples of almost complex manifolds such that their connected sum is again almost complex. For example, consider the smooth manifold $\CP^{2m+1}$. It admits an orientation-reversing diffeomorphism $[z_1, \ldots, z_{2m+2}] \mapsto [\overline{z_1}, \ldots, \overline{z_{2m+2}}]$, so there is an orientation-preserving diffeomorphism between the connected sum $\CP^{2m+1} \# \CP^{2m+1}$ and $\CP^{2m+1} \# \overline{\CP^{2m+1}}$, and the latter admits a complex structure as the blowup at a point of the standard complex structure on $\CP^{2m+1}$.

However, there are also examples of almost complex manifolds of dimension $4m + 2 > 6$ such that their connected sum does not admit an almost complex structure. For example, consider the Calabi--Eckmann complex manifold whose underlying smooth manifold is the standard $S^{2m+1}\times S^{2m+1}$. By a result of Yang \cite[Theorem 2]{Yang}, if the manifold $X = (S^{2m+1}\times S^{2m+1})\#(S^{2m+1}\times S^{2m+1})$ were to admit an almost complex structure, then we would have $(2m)! \mid \chi(X)$. As $\chi(X) = -2$, we see that $X$ does not admit an almost complex structure if $m > 1$; see also the recent \cite[Proposition 1.1]{Yang2}.

Therefore, we have an answer to the initial question.

\begin{Answer}
The connected sum of any two closed almost complex manifolds is again almost complex only in dimensions 2 and 6.
\end{Answer}

A remark of this form can be found in \cite{Geiges}, though in the references therein they ask for an almost complex structure on the connected sum which extends given almost complex structures on the summands, so the conclusion here is stronger.

In fact, Yang's result shows that the connected sum of $\ell$ copies of $S^{2m+1}\times S^{2m+1}$ does not admit an almost complex structure for $\ell \not\equiv 1 \bmod \frac{1}{2}(2m)!$. Moreover, for these highly connected manifolds, his results determine those values of $\ell$ for which the connected sum does admit an almost complex structure\footnote{In order to apply Yang's results, one must first show that the connected sum of $\ell$ copies of $S^{2m+1}\times S^{2m+1}$ is stably almost complex. This follows from Proposition 2.2 as $S^{2m+1}\times S^{2m+1}$ is stably parallelisable}. This leads us to the following question:


\begin{Question}
For which values of $\ell$ is the connected sum of $\ell$ closed almost complex manifolds necessarily an almost complex manifold?
\end{Question}

As we saw when answering the previous question, the values of $\ell$ will depend on the dimension. The argument in the case of $4m + 2 > 6$ shows that we necessarily have $\ell \equiv 1 \bmod \tfrac{1}{2}(2m)!$ in these dimensions. We remark that this can also be observed from a computation of the index of the twisted spin${}^c$ Dirac operator; see Proposition 2.9.

In Section 2, we prove the following theorem which answers the above question:

\begin{theoremA}
The connected sum of $\ell$ closed almost complex manifolds of dimension $n$ is again almost complex if
\begin{itemize}
\item $n = 4m$, and $\ell = 1$,
\item $n = 8k + 2$, and $\ell \equiv 1 \bmod (4k)!$,
\item $n = 8k + 6$, and $\ell \equiv 1 \bmod \frac{1}{2}(4k +2)!$.
\end{itemize}
For every other value of $\ell$, there exist collections of $\ell$ closed almost complex manifolds of the appropriate dimension whose connected sum does not admit an almost complex structure.
\end{theoremA}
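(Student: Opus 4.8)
The plan is to reduce ``$W:=M_1\#\cdots\#M_\ell$ is almost complex'' to one numerical equation about stable almost complex structures, and then to read off the answer from Bott periodicity together with elementary facts about connected sums.

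\emph{Step 1: reduction to a top Chern number.} By Proposition 2.2, $W$ is stably almost complex, since each $M_i$ is. Writing $n=2q$, I would destabilise a stable almost complex structure $\tau$ on $W$ one step at a time along the tower built from the fibrations $\SO(2q+2j)/\UU(q+j)\to \SO(2q+2j+1)/\UU(q+j)\to S^{2q+2j}$; all of the resulting obstructions lie in groups $H^{i}(W;\pi_i(S^{n+2j}))$ with $i>n$, hence vanish, except for a single obstruction in $H^{n}(W;\pi_{n}(S^{n}))=\ZZ$, which classically equals $\pm\big(\langle c_q(\tau),[W]\rangle-\chi(W)\big)$. Consequently $W$ is almost complex if and only if $\langle c_q(\tau),[W]\rangle=\chi(W)$ for some stable almost complex structure $\tau$ on $W$; for highly connected $W$ this is the substance of Yang's theorem \cite[Theorem 2]{Yang}. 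The key connected-sum input is that the stable tangent bundle of $W$ is the pullback, along the pinch map $p\colon W\to M_1\vee\cdots\vee M_\ell$, of the wedge of the stable tangent bundles of the $M_i$. Picking for each $M_i$ a stable almost complex structure $\tau_i$ with $\langle c_q(\tau_i),[M_i]\rangle=\chi(M_i)$ (possible because $M_i$ is almost complex, so $c_q=e$) and setting $\tau_0:=p^{*}(\tau_1\vee\cdots\vee\tau_\ell)$, one gets
$$\langle c_q(\tau_0),[W]\rangle=\sum_i\chi(M_i),\qquad \chi(W)=\sum_i\chi(M_i)-2(\ell-1),$$
so $W$ is almost complex iff the top Chern number of some stable almost complex structure differs from that of $\tau_0$ by exactly $2(\ell-1)$.

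\emph{Step 2: how far the top Chern number can move, and the ``if'' direction.} The stable almost complex structures on $W$ form a torsor over $[W,\SO/\UU]$, and modifying $\tau_0$ over the top cell by an element of $H^{n}(W;\pi_{n}(\SO/\UU))=\pi_n(\SO/\UU)$ changes $\langle c_q(-),[W]\rangle$ through $\pi_n(\SO/\UU)\to\pi_n(B\UU)\xrightarrow{c_q}\ZZ$. By Bott periodicity $\pi_n(\SO/\UU)=KO^{-n-2}$, so this group is $0$ when $n\equiv0\bmod4$ and $\ZZ$ when $n\equiv2\bmod4$; the image of $\pi_n(\SO/\UU)\to\pi_n(B\UU)=\ZZ$ is the kernel of realification $\pi_n(B\UU)\to\pi_n(B\SO)=\pi_{n-1}(\SO)$, which is $0$, $2\ZZ$, $\ZZ$ according as $n\equiv0\bmod4$, $n\equiv2\bmod8$, $n\equiv6\bmod8$; and $c_q$ on $\pi_n(B\UU)=\ZZ$ is multiplication by $(q-1)!$. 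Hence the achievable shift of the top Chern number is $e_n\ZZ$ with $e_n=0$, $2(q-1)!$, $(q-1)!$ in those three cases. Since $q-1=4k$ when $n=8k+2$ and $q-1=4k+2$ when $n=8k+6$, from $\tau_0$ one can reach the value $\chi(W)$ exactly when $e_n\mid 2(\ell-1)$: never unless $\ell=1$ if $n\equiv0\bmod4$; for $\ell\equiv1\bmod(4k)!$ if $n=8k+2$; for $\ell\equiv1\bmod\tfrac12(4k+2)!$ if $n=8k+6$. Applying the corresponding torsor element to $\tau_0$ and destabilising produces the almost complex structure, proving the ``if'' part uniformly in the $M_i$.

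\emph{Step 3: counterexamples for the remaining $\ell$.} For an excluded $\ell$ I would take $M_1=\cdots=M_\ell=X$. If $n\equiv0\bmod4$, take $X=S^{1}\times S^{n-1}$, a Hopf manifold, hence complex and moreover parallelisable; then $W=X^{\#\ell}$ is stably parallelisable, and $H^{2i}(W)=0$ for $0<i<q$ (indeed for $n\ge4$ the only cohomology of $W$ sits in degrees $0,1,n-1,n$). An almost complex structure on $W$ would make $TW$ a complex rank-$q$ bundle with $c_1=\cdots=c_{q-1}=0$, so Newton's identities give $p_{q/2}(TW)=(-1)^{q/2}2c_q(TW)$; but $p_{q/2}(TW)=0$ by stable parallelisability, forcing $\langle c_q(TW),[W]\rangle=0$, whereas $\langle c_q(TW),[W]\rangle=\chi(W)=-2(\ell-1)\neq0$ for $\ell\ge2$ — a contradiction. (For $\ell$ even this is also the Hirzebruch $\chi_y$-genus obstruction of the introduction \cite{Hirz}.) If $n\equiv2\bmod4$, take $X=S^{q}\times S^{q}$ with $q=n/2$ odd, with its Calabi--Eckmann complex structure; then $W=X^{\#\ell}$ is $(q-1)$-connected, so Yang's Theorem 2 \cite[Theorem 2]{Yang} applies and characterises almost complexity of $W$ by a divisibility of $\chi(W)=-2(\ell-1)$ by the relevant factorial, which again amounts to $e_n\mid 2(\ell-1)$, i.e. to the stated congruence on $\ell$; hence $W$ is not almost complex when $\ell$ is excluded.

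\emph{Main obstacle.} The crux is Steps 1--2: proving, for an arbitrary closed oriented $W$, that the destabilisation obstruction is precisely $\langle c_q(\tau),[W]\rangle-\chi(W)$ (organising the tower of fibrations $\SO(2q)/\UU(q)\to\SO(2q+1)/\UU(q)\to S^{2q}$ correctly and keeping track of indeterminacies), and carrying out the Bott-periodicity computation of $e_n$ — in particular the factor $2$ that distinguishes $n\equiv2\bmod8$ from $n\equiv6\bmod8$ and is responsible for the $\tfrac12$ in the third case of the theorem. Once that is in place, the connected-sum bookkeeping and the rigidity of the Hopf and Calabi--Eckmann examples are routine.
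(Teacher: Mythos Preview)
Your argument is correct in substance and reaches the same conclusions as the paper, but the obstruction theory is organised differently. The paper fixes the single stable almost complex structure $\omega$ coming from Corollary~2.3, invokes Kahn's formula $\mathfrak{o}(M,J)=\tfrac12(\chi(M)-c_{q}(\omega))\,\mathfrak{o}(S^{n})\in\pi_{n-1}(\SO(n)/\UU(q))$ for the obstruction to extending $J$ over the top cell, and then reads off the orders $(4k)!$ and $\tfrac12(4k+2)!$ from Kahn's tables for these unstable groups. You instead vary over \emph{all} stable almost complex structures, reduce to the criterion ``$c_q(\tau)=\chi(W)$ for some $\tau$'', and compute the reachable top Chern numbers via the stable group $\pi_n(\SO/\UU)$ and Bott periodicity. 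These are two sides of the same coin (cf.\ the paper's Remark~2.8, which identifies $\mathfrak{o}(S^{n})$ with a generator of $\ker(\pi_{n-1}(\SO(n)/\UU(q))\to\pi_{n-1}(\SO/\UU))$); your packaging is a bit more self-contained, while the paper's is shorter because it can quote Kahn. The counterexamples in your Step~3 (Hopf manifolds for $n\equiv0\bmod4$, Calabi--Eckmann manifolds plus Yang for $n\equiv2\bmod4$) are exactly those of the paper.

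Two small points. First, your claim that $\pi_n(\SO/\UU)=0$ for $n\equiv0\bmod4$ is not quite right: $\pi_n(\SO/\UU)\cong KO^{-n-2}(\mathrm{pt})$ is $\ZZ_2$ when $n\equiv0\bmod8$ and $0$ when $n\equiv4\bmod8$. This does not affect your conclusion, since in either case the image in $\pi_n(B\UU)=\ZZ$ is zero, so $e_n=0$ as you assert. Second, the fibration tower you write in Step~1 is a little garbled (in $\SO(2q+2j)/\UU(q+j)\to\SO(2q+2j+1)/\UU(q+j)\to S^{2q+2j}$ the first space is the \emph{fibre}, not a space you lift into); the clean justification of your ``iff'' is rather: any stable $\tau$ destabilises uniquely to a rank-$q$ complex bundle $E$ on the $2q$-complex $W$ (since $B\UU(q)\to B\UU$ is $(2q+1)$-connected), $E_\RR$ and $TW$ are stably isomorphic, and two oriented rank-$2q$ bundles over a $2q$-complex that are stably isomorphic agree iff their Euler classes do---which is precisely $\langle c_q(\tau),[W]\rangle=\chi(W)$. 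With that in place your Steps~1--3 go through.
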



Note that in the above theorem, $\ell = 2$ occurs only for $n = 2$ and $6$.\\

In stark contrast to the above, if the summands are allowed to be non-compact manifolds, we obtain the following (Theorem 2.5):

\begin{theoremB} The connected sum of any finite collection of almost complex manifolds of the same dimension is again almost complex if at least one of them is non-compact. \end{theoremB}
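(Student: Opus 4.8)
The plan is to reduce the problem to an obstruction-theoretic statement and then exploit the fact that a non-compact manifold of dimension $n$ has the homotopy type of a CW complex of dimension at most $n-1$. Concretely, suppose $M_1, \dots, M_\ell$ are almost complex manifolds of dimension $n = 2d$, with $M_\ell$ non-compact. First I would observe that the connected sum $X = M_1 \# \cdots \# M_\ell$ is stably almost complex: away from the gluing regions it carries the almost complex structures of the summands, and since the stable tangent bundle obstruction lives in cohomology, one uses a Mayer--Vietoris argument — or directly the fact (used already in the paper for the highly connected case, and provable via Proposition 2.2) that $X$ admits a stable almost complex structure because each $M_i$ does and $S^{n-1}$ is stably parallelisable. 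So $X$ has a stable almost complex structure, i.e.\ a lift of its stable normal (equivalently tangent) bundle map to $BU$.

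Next I would recall the standard obstruction-theoretic fact that an oriented $2d$-manifold with a stable almost complex structure admits an actual almost complex structure if and only if a certain lift of the classifying map $X \to BSO(2d)$ to $BU(d)$ exists, and the obstruction to this lift is the primary obstruction in $H^{2d}(X; \pi_{2d-1}(SO(2d)/U(d)))$ — all lower obstructions vanish precisely because of the stable almost complex structure (the fibre $SO(2d)/U(d)$ and its stable version $SO/U$ agree through dimension $2d-2$, so the lift over the $(2d-1)$-skeleton is exactly the data of a stable complex structure, which we have). This is essentially the content of the proof of Theorem A for the compact case; I would cite that discussion. The key point is that this single remaining obstruction is a top-degree cohomology class.

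Now the non-compactness does the work: since $M_\ell$ is a non-compact connected $n$-manifold, $X = M_1 \# \cdots \# M_\ell$ is also a non-compact connected $n$-manifold, hence (being a smooth open manifold) homotopy equivalent to a CW complex of dimension $\le n-1 = 2d-1$. Therefore $H^{2d}(X; A) = 0$ for any coefficient group $A$, so the sole remaining obstruction in $H^{2d}(X; \pi_{2d-1}(SO(2d)/U(d)))$ vanishes automatically, and the lift to $BU(d)$ exists. Hence $X$ admits an almost complex structure.

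I expect the main obstacle to be the clean verification of the first step — that the connected sum is stably almost complex with a stable complex structure that genuinely restricts from the given almost complex structures on the summands in a way compatible with the obstruction theory — and, relatedly, making sure the obstruction theory is set up so that "stably almost complex over the $(2d-1)$-skeleton" is literally the hypothesis needed to kill all but the top obstruction. The fact that open manifolds have the homotopy type of a lower-dimensional complex is standard (Morse theory on a proper function with no maximum, or handle decompositions without top handles), so that step is routine; the care is all in the bundle-theoretic bookkeeping, which parallels the compact case treated earlier in Section 2.
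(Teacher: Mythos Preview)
Your proposal is correct and follows essentially the same route as the paper: the connected sum inherits a stable almost complex structure (Proposition~2.2/Corollary~2.3), and because a non-compact $2d$-manifold has the homotopy type of a complex of dimension $\le 2d-1$ (Whitehead), the stable structure destabilises to an actual almost complex structure. The paper packages the second step as a direct lifting argument (Lemma~2.4, comparing $[X,BU(d)]\to[X,BU]$ and $[X,BSO(2d)]\to[X,BSO]$), whereas you phrase it as vanishing of the top obstruction class in $H^{2d}(X;\pi_{2d-1}(SO(2d)/U(d)))$, but this is only a cosmetic difference.
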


In a previous paper \cite{AM}, the twisted spin${}^c$ Dirac operator was used to extend Borel and Serre's famous result \cite{BS} on the classification of almost complex spheres to rational homology spheres. Given the prevalence of products of two spheres in the above, we apply the same technique to products of two rational homology spheres. In Section 3, we obtain the following partial generalisation of Datta and Subramanian's result \cite{DS} on the nonexistence of almost complex structures on products of two even-dimensional spheres (Theorem 3.2):

\begin{theoremC}
Let $M$ and $N$ be rational homology spheres of dimensions $2p$ and $2q$ respectively, $p, q \geq 1$.
\begin{enumerate}
\item[(a)] If $p$ and $q$ are even, then $M\times N$ does not admit an almost complex structure.
\item[(b)] If $p>1$ is odd and $q$ is even, then $M\times N$ does not admit an almost complex structure. 
\item[(c)] If $p = 1$, then $M\times N = S^2\times N$ admits an almost complex structure if and only if $q = 1$, $q = 2$, or $q = 3$ and $N$ is spin${}^c$.
\end{enumerate}
\end{theoremC}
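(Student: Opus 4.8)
The strategy, following \cite{AM}, is as follows. Suppose $X = M\times N$ admits an almost complex structure $J$; then $X$ carries the induced spin${}^c$ structure, whose determinant line bundle has first Chern class $c_1(J)$, and for every complex vector bundle $E\to X$ the index of the associated twisted spin${}^c$ Dirac operator is the integer $\mathrm{ind}(D\otimes E)=\int_X e^{c_1(J)/2}\hat A(X)\ch(E)=\int_X \Td(X)\ch(E)$. The plan is to play this integrality against the fact that $X$ is a rational homology product of spheres. The preliminary facts I would record first: $H^*(X;\QQ)$ is spanned by $1$, the pullbacks of the fundamental cohomology classes of $M$ and $N$, and the top class; since each of $M$, $N$ is a rational homology sphere, degree considerations together with the signature theorem force all of their rational Pontryagin classes to vanish, hence all rational Pontryagin classes of $X$ vanish; therefore $\hat A(X)=1$ rationally and the rational Chern character of $TX\otimes_\RR\CC\cong T^{1,0}X\oplus\overline{T^{1,0}X}$ is the constant $2\dim_\CC X$, so the positive even-degree components of $\ch(T^{1,0}X)$ vanish rationally; the only rational Chern classes of $(TX,J)$ that may be nonzero are $c_1$, $c_p$, $c_q$, $c_{p+q}$; and $c_{p+q}(X)[X]=\chi(X)=\chi(M)\chi(N)=4$.

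For part (a), with $p$ and $q$ even, no index theory is needed. The identity $\sum_i(-1)^i p_i(TX)=c(TX,J)\,\overline{c(TX,J)}$, the vanishing of the rational Pontryagin classes, and $(-1)^p=(-1)^q=(-1)^{p+q}=1$ give $(1+c_p+c_q+c_{p+q})^2=1$ in $H^*(X;\QQ)$; since the positive-degree part is nilpotent this forces $c_{p+q}(X)=0$ rationally, contradicting $c_{p+q}(X)[X]=4$.

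For part (b) and for the subcase $q\geq 4$ of part (c), the key idea is to twist not by a line bundle --- line-bundle twists carry no information here, because $H^2(X;\QQ)$ is at most one-dimensional with square-zero generator --- but by $T^{1,0}X$ itself (the tangent bundle $(TX,J)$ viewed as a complex vector bundle). Unwinding $\mathrm{ind}(D\otimes T^{1,0}X)=\int_X\Td(X)\ch(T^{1,0}X)$ with the preliminaries and Newton's identities collapses the computation to a single surviving term, giving, according to the parities of $p$ and $q$, one of $\tfrac{4}{(p+q-1)!}$, $\tfrac{4}{q!}$, or $\tfrac{2}{(q-1)!}$ --- in the last case (which is $p=1$, $q$ odd) one first uses the vanishing of the top even-degree component of $\ch(T^{1,0}X)$ to deduce the relation $c_1(X)c_q(X)[X]=4$. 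As $p+q\geq 5$ in part (b) and $q\geq 4$ in part (c), none of these numbers is an integer, contradicting the index theorem. I expect the main obstacle to be the bookkeeping: confirming that the a priori dangerous mixed contribution of $c_p$ (respectively $c_q$) genuinely disappears --- this uses $\ch_j(T^{1,0}X)=0$ rationally for even $j$ in positive degree --- and that these fractions are the entire value of the characteristic number rather than just one of several terms.

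Finally, the positive assertions in part (c). When $q=1$, a rational homology $2$-sphere is diffeomorphic to $S^2$, so $S^2\times N\cong\CP^1\times\CP^1$. When $q=2$, every closed oriented $4$-manifold is spin${}^c$, so $W_3(N)=0$, hence $W_3(S^2\times N)=1\times W_3(N)=0$ (as $w_2(S^2)=0$), and by Massey's theorem the oriented $6$-manifold $S^2\times N$ is almost complex. When $q=3$ and $N$ is spin${}^c$, Massey's theorem supplies an almost complex structure on the oriented $6$-manifold $N$, and its product with the standard structure on $\CP^1$ is the required structure on $S^2\times N$. Conversely, if $S^2\times N$ with $q=3$ is almost complex then $0=W_3(S^2\times N)=1\times W_3(N)$, so $W_3(N)=0$, i.e.\ $N$ is spin${}^c$.
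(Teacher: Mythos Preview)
Your proposal is correct and follows essentially the same route as the paper: the Pontryagin--Chern relation $c\bar c=1$ (equivalently, your observation that the even-degree components of $\ch(T^{1,0}X)$ vanish) handles part (a) and supplies the needed vanishing of $c_q$ in parts (b) and (c), and the index of the spin${}^c$ Dirac operator twisted by $T^{1,0}X$ then yields exactly the fractions $\tfrac{4}{(p+q-1)!}$, $\tfrac{4}{q!}$, $\tfrac{2}{(q-1)!}$ that the paper obtains. Your treatment of the positive assertions in (c) via $W_3$ and K\"unneth is slightly more explicit than the paper's, but the argument is the same.
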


The authors would like to thank John Morgan for a helpful discussion relating to Lemma 2.4, and Peter J. Kahn for sending us his paper \cite{Kahn} wherein the result of our Corollary 2.3 had already been noted. We would also like to thank the referee for numerous suggestions which improved the clarity of the exposition.

\section{Connected sums of almost complex manifolds}

Now we address the second question in the introduction. We will consider separately dimensions of the form $4m$ and $4m + 2$. 

\subsection{Dimensions of the form $4m$.} We show by example that there is no $\ell > 1$ for which the connected sum of any $\ell$ almost complex $4m$--manifolds is ensured to be almost complex. Consider $S^1 \times S^{4m-1}$, which admits a complex structure under which it is known as a Hopf manifold. Let $\ell\geq 2$ be arbitrary, and consider the $\ell$--fold connected sum $\#_{i=1}^{\ell}(S^1 \times S^{4m-1})$. The Pontryagin classes $p_1, \ldots, p_{m-1}$ of this manifold vanish for degree reasons, and so by the Hirzebruch signature formula $p_m$ must vanish as well, since the signature is zero and the leading coefficient in the Hirzebruch $L$--polynomial is non-zero \cite[p.12]{H}. (Alternatively, the Pontryagin classes vanish because the manifold is stably parallelisable; cf. Proposition 2.2). The Chern classes $c_1, \ldots, c_{2m-1}$ of any almost complex structure on this manifold would also vanish for degree reasons, and $c_{2m}$ would evaluate to the Euler characteristic when paired with the fundamental class. We obtain a contradiction because $0 = p_m = 2(-1)^mc_{2m}$ implies $c_{2m} = 0$, whereas the Euler characteristic is $2-2\ell \neq 0$. Therefore $\#_{i=1}^{\ell}(S^1 \times S^{4m-1})$ does not admit an almost complex structure. Note that this argument would go through for any $\ell$ closed almost complex manifolds with the same rational cohomology as a product of two odd--dimensional spheres (so in particular, for any collection of Calabi--Eckmann manifolds); although the intermediate Chern and Pontryagin may be torsion in this case, the equality $p_m = 2(-1)^m c_{2m}$ still holds.

\begin{remark} For any even number $\ell$, the connected sum of $\ell$ closed almost complex $4m$--manifolds will never admit an almost complex structure; this follows from the congruence $\chi \equiv (-1)^m \sigma \bmod 4$. For any odd number $\ell$, the connected sum $\#_{i=1}^{\ell} \CP^{2m}$ admits an almost complex structure \cite{GK}. So, there exist particular examples where a connected sum of $\ell$ closed almost complex manifolds admits an almost complex structure (provided $\ell$ is odd). \end{remark}

\subsection{Dimensions of the form $4m+2$.}

In these dimensions, we shall see that there are numbers $\ell$, depending on the dimension, which answer the above question. First we recall the following description of the stable tangent bundle of a connected sum, proven in detail in \cite[Lemma 2.1]{GK}:

\begin{proposition} Let $M$ and $N$ be oriented smooth manifolds of dimension $d$. Denote by $p_M$ and $p_N$ the collapsing maps $M\#N \to M$ and $M\# N \to N$. Then, as oriented real vector bundles, $T(M\# N) \oplus \varepsilon_{\RR}^{d} \cong p_M^*(TM) \oplus p_N^*(TN)$. \end{proposition}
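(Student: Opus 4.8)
The plan is to recover the isomorphism from local isomorphisms over the two punctured summands, patched together over the separating sphere; the essential point, and the one requiring the most care, is this patching.

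First I would recall the local geometry of $M\# N$. Write $M\# N = U\cup V$, where $U$ is an open subset diffeomorphic to $M$ with a closed disk removed, $V$ is diffeomorphic to $N$ with a closed disk removed, and $U\cap V$ is an open collar of the separating sphere, so $U\cap V\cong S^{d-1}\times(-1,1)$ and in particular $U\cap V\simeq S^{d-1}$. The collapsing maps may be chosen so that $p_M$, restricted to the complement of the collar in $U$, is the inclusion of an open submanifold into $M$, while $p_M$ carries all of $V$ (and the collar) into the coordinate disk of $M$ along which the connected sum was formed; symmetrically for $p_N$.

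Next I would build the isomorphism over each piece. Over $U$: because $p_M|_U$ agrees off the collar with an open embedding into $M$, the bundle $T(M\# N)|_U$ is isomorphic to $p_M^*(TM)|_U$; and because $p_N|_U$ factors through a contractible disk, $p_N^*(TN)|_U$ is trivial. Hence $(p_M^*TM\oplus p_N^*TN)|_U\cong T(M\# N)|_U\oplus\varepsilon_{\RR}^{d}$. The mirror-image construction produces the analogous isomorphism over $V$.

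It remains to glue these two isomorphisms, which is the crux. Over the collar $U\cap V$ both bundles in question are trivial: $p_M^*TM$ and $p_N^*TN$ are trivial there since their classifying maps factor through disks, while $T(M\# N)$ restricted to $S^{d-1}\times(-1,1)$ is $\mathrm{pr}^*TS^{d-1}\oplus\varepsilon_{\RR}^{1}\cong\mathrm{pr}^*(TS^{d-1}\oplus\varepsilon_{\RR}^{1})$, which is trivial because $S^{d-1}$ is stably parallelisable. One must then verify that the isomorphisms over $U$ and $V$ can be chosen so that, over the collar, each is the comparison of two fixed trivialisations; the freedom to arrange this comes from the fact that the trivialisation of $p_N^*TN$ over the collar extends over all of $U$ (as $p_N|_U$ factors through a disk), and dually for $p_M^*TM$ over $V$. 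With the two isomorphisms made to agree on $U\cap V$, they assemble into a global isomorphism $T(M\# N)\oplus\varepsilon_{\RR}^{d}\cong p_M^*TM\oplus p_N^*TN$; orientations are respected throughout, since at each stage the relevant structure groups ($\GL^{+}$ of the appropriate rank) are connected. I expect this patching step to be the only genuine obstacle. One could also phrase the argument by making $(p_M,p_N)\colon M\# N\to M\times N$ an immersion, so that the claim becomes the triviality of its rank-$d$ normal bundle, but verifying that triviality along the neck region runs into the same point.
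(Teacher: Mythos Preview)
The paper does not prove this proposition itself; it merely records the statement and points to \cite[Lemma~2.1]{GK} for the details. So there is no in--paper argument to compare against directly, only the Goertsches--Konstantis proof.

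Your outline is sound and you correctly isolate the collar patching as the only substantive issue, but you do not actually carry it out, and the sentence you offer in lieu of a verification does not close the gap. The isomorphism you build over $U$ is block--diagonal (it sends $p_M^*TM$ to $T(M\#N)$ and trivialises $p_N^*TN$), while the one over $V$ is block--antidiagonal; on the collar their discrepancy is therefore a map $S^{d-1}\to\GL^+(2d,\RR)$ which, in any trivialisation, interchanges the two rank--$d$ summands. The ``freedom'' you invoke---altering the trivialisation of $p_N^*TN$ by something that extends over $U$, and dually on the other side---acts only blockwise, so it does not obviously kill this swap. Since $\pi_{d-1}(\GL^+(2d))$ is nonzero for many $d$, one really must compute here: either identify the discrepancy explicitly with a constant (hence null--homotopic) map after accounting for the orientation--reversing gluing diffeomorphism in the connected sum, or compare the clutching functions of the two rank--$2d$ bundles directly.

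Your parenthetical alternative---realising $(p_M,p_N)$ by a smooth embedding $M\#N\hookrightarrow M\times N$ and showing its normal bundle is trivial---is in fact the route taken in \cite{GK}, and there the neck is handled by writing down the embedding explicitly inside $D^d\times D^d$ and reading off a frame for the normal bundle that matches the obvious frames over the two punctured summands. So contrary to your closing remark, the embedding approach does not ``run into the same point'': it converts the patching problem into a concrete local--model calculation, which is exactly the step your first argument leaves unfinished.
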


We will make use of the following observation:

\begin{corollary} Let $M$ and $N$ be oriented smooth manifolds of dimension $d$. If $M$ and $N$ admit stable almost complex structures $\omega_M$ and $\omega_N$, then $M\# N$ admits a stable almost complex structure $\omega_{M\#N}$ such that $$c_j(\omega_{M\#N}) = p_M^*(c_j(\omega_M)) + p_N^*(c_j(\omega_N))$$ for all $j \geq 1$.\end{corollary}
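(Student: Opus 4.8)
The plan is to combine Proposition 2.1 with the standard fact that a stable almost complex structure on an oriented manifold is the same data as a lift of the classifying map of its stable tangent bundle through $BU \to BO$, and that such lifts behave well under Whitney sum. First I would unwind Proposition 2.1: it gives a stable isomorphism of oriented real bundles $T(M\#N) \oplus \varepsilon_\RR^d \cong p_M^*(TM) \oplus p_N^*(TN)$ over $M\#N$. The hypotheses provide $BU$-structures $\omega_M$ on $TM$ (stably) and $\omega_N$ on $TN$ (stably), i.e. lifts of the maps $M \to BO$, $N \to BO$ to $BU$. Pulling these back along $p_M$ and $p_N$ and taking the Whitney sum (using that $BU$ is an infinite loop space, or more concretely the classifying space for the stabilised unitary group, so direct sums of complex structures are again complex structures) yields a stable almost complex structure $\omega_{M\#N}$ on the bundle $p_M^*(TM) \oplus p_N^*(TN)$, hence, via the isomorphism of Proposition 2.1, on $T(M\#N) \oplus \varepsilon_\RR^d$, i.e.\ a stable almost complex structure on $M\#N$.

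The second step is to track the Chern classes. Since $\varepsilon_\RR^d$ contributes nothing to Chern classes (it is trivial, hence admits a trivial complex structure with vanishing Chern classes, and we may absorb it into the stabilisation), the total Chern class of $\omega_{M\#N}$ equals the total Chern class of $p_M^*(\omega_M) \oplus p_N^*(\omega_N)$, which by the Whitney sum formula is $c(p_M^*\omega_M) \smile c(p_N^*\omega_N) = p_M^*(c(\omega_M)) \smile p_N^*(c(\omega_N))$. Now I would use the structure of $H^*(M\#N)$ in positive degrees: the collapse maps $p_M, p_N$ induce, in positive degrees, an isomorphism $H^{>0}(M\#N) \cong H^{>0}(M) \oplus H^{>0}(N)$ (away from the top degree this is clear from the Mayer--Vietoris sequence for $M\#N = (M\setminus \mathrm{pt}) \cup (N \setminus \mathrm{pt})$; in top degree one uses orientations), and crucially the product of a class pulled back from $M$ with a class pulled back from $N$ vanishes, since such classes have disjoint support. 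Therefore in the product $p_M^*(c(\omega_M)) \smile p_N^*(c(\omega_N))$ all cross terms die, leaving $c_j(\omega_{M\#N}) = p_M^*(c_j(\omega_M)) + p_N^*(c_j(\omega_N))$ for $j \geq 1$, as claimed.

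The main obstacle I anticipate is making the vanishing of the cross terms $p_M^*(a) \smile p_N^*(b)$ for $a \in H^{>0}(M)$, $b \in H^{>0}(N)$ fully rigorous, including in the subtle top-degree case. The cleanest way is to note that $p_M$ factors through $M\#N \to M\#N / (N\setminus D^d) \simeq M$, so $p_M^*(a)$ is supported in the copy of $M \setminus D^d$ inside $M\#N$; symmetrically $p_N^*(b)$ is supported in $N\setminus D^d$; these two open sets have intersection an annulus $S^{d-1} \times (0,1)$, and for degree reasons (when $\deg a + \deg b = d$ one still gets vanishing because the restriction of each class to the annulus is zero, the annulus being homotopy equivalent to $S^{d-1}$ and the classes having positive degree strictly less than $d-1$... ) — this last point needs a small separate argument when one of the degrees equals $d-1$ or $d$, handled by observing that then the other class has degree $\leq 1$ and one checks $H^0$ and $H^1$ contributions directly, or simply by citing the detailed treatment in \cite[Lemma 2.1]{GK} from which Proposition 2.1 is quoted. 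With that in hand the corollary follows immediately.
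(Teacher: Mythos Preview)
Your proposal is correct and follows the same approach as the paper: pull back the two stable complex structures along $p_M$ and $p_N$, Whitney-sum them on $p_M^*TM \oplus p_N^*TN \cong T(M\#N)\oplus\varepsilon_\RR^d$, and kill the cross terms in the Cartan formula via $\im p_M^* \cdot \im p_N^* = 0$ for factors in degrees strictly between $0$ and $d$. Your concern about the top-degree case is unnecessary: the relative-cup-product argument behind ``disjoint support'' (lift to $H^*(M\#N, N\setminus D)$ and $H^*(M\#N, M\setminus D)$, then cup into $H^*(M\#N, M\#N)=0$) handles all such products uniformly, and the paper simply asserts this vanishing without further comment.
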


\begin{proof} Since $M$ and $N$ are stably almost complex, there are integers $k$ and $l$ such that $TM\oplus \varepsilon_{\RR}^k$ and $TN\oplus \varepsilon_{\RR}^l$ admit the structure of complex vector bundles. Then $p_M^*(TM \oplus \varepsilon_{\RR}^k) \oplus p_N^*(TN \oplus \varepsilon_{\RR}^l)$ admits the structure of a complex vector bundle, and as oriented real vector bundles we have $$p_M^*(TM \oplus \varepsilon_{\RR}^k) \oplus p_N^*(TN \oplus \varepsilon_{\RR}^l) \cong p_M^*(TM) \oplus p_N^*(TN) \oplus \varepsilon_{\RR}^{k+l} \cong T(M\# N) \oplus \varepsilon_{\RR}^{k+l+d}.$$ Therefore $T(M\# N) \oplus \varepsilon_{\RR}^{k+l+d}$ admits the structure of a complex vector bundle which is isomorphic to $p_M^*(TM \oplus \varepsilon_{\RR}^k) \oplus p_N^*(TN \oplus \varepsilon_{\RR}^l)$ as complex vector bundles. Let us denote this stable almost complex structure on $M\#N$ by $\omega_{M\#N}$.

Now by naturality and the Cartan formula for Chern classes, we have the following for any $j\geq 1$: \begin{align*}  c_j(\omega_{M \# N}) &= c_j(T(M\# N) \oplus \varepsilon_{\RR}^{k+l+d}) = c_j(p_M^*(TM \oplus \varepsilon_{\RR}^k) \oplus p_N^*(TN \oplus \varepsilon_{\RR}^l)) \\ &= \sum_{i=0}^j c_i(p_M^*(TM \oplus \varepsilon_{\RR}^k)) \, c_{j-i}(p_N^*(TN \oplus \varepsilon_{\RR}^l)) \\ &= \sum_{i=0}^j p_M^*(c_i(TM \oplus \varepsilon_{\RR}^k)) \, p_N^*(c_{j-i}(TN \oplus \varepsilon_{\RR}^l)) \\ &= p_M^*(c_j(TM \oplus \varepsilon_{\RR}^k)) + p_N^*(c_j(TN \oplus \varepsilon_{\RR}^l)).\end{align*} In the last equality, we used that in degrees strictly between $0$ and $d$ we have $\im{p_M^*} \cdot \im{p_N^*} = 0$.   \end{proof}

Now we note that if we have a stable almost complex structure $\omega$ on a closed manifold $M$, then there exists an almost complex structure on the non-compact manifold $M \setminus D$ which induces a stable almost complex structure isomorphic to $\omega \vert_{M\setminus D}$ upon stabilisation, and extends to a stable almost complex structure over $M$ which is isomorphic to $\omega$. Here $D$ denotes an embedded closed disc of the same dimension as $M$. This is remarked on in \cite[p. 345, paragraph 2]{K} (see also \cite{Kahn}), but we include the details for the convenience of the reader. 

\begin{lemma} On a non-compact $2n$--manifold $X$ with stable almost complex structure $\omega_X$, there is an almost complex structure $J$ such that the map $X \overset{J}{\longrightarrow} BU(n) \rightarrow BU$ is homotopic to $X \overset{\omega_X}{\longrightarrow}BU$ (i.e. $J$ induces a stable almost complex structure isomorphic to $\omega_X$). \end{lemma}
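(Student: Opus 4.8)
The plan is to reduce the statement to an obstruction-theoretic lifting problem and then exploit non-compactness to kill the obstructions. A stable almost complex structure $\omega_X$ is a lift of the classifying map $\tau_X \colon X \to BO$ of the stable tangent bundle through $BU \to BO$; since $X$ has the homotopy type of a CW complex, and a stable bundle over a space of dimension $2n$ is determined by a bundle of rank $n$ after adding trivial summands, I want to upgrade this stable lift to an honest reduction of structure group of $TX$ itself from $O(2n)$ to $U(n)$. So I first replace $\omega_X$ by a genuine complex vector bundle $E \to X$ of rank $n$ with $E \cong_{\RR} TX$; such an $E$ exists because $X$ is an open manifold, hence homotopy equivalent to a CW complex of dimension at most $2n-1$ (an open $2n$-manifold deformation retracts onto a complex of dimension $\leq 2n-1$), and in that range the stabilization map $BU(n) \to BU$ is surjective on homotopy classes of maps from $X$.

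The second step is to promote the bundle isomorphism $E \cong_\RR TX$ to an actual almost complex structure, i.e. a section of the bundle over $X$ whose fiber at $x$ is the space of linear complex structures on $T_xX$ compatible with the given orientation — equivalently, the space $O(2n)/U(n)$ — knowing a priori that a section of the associated bundle with fiber $O(2n)/U(n) \times$ (something stable) exists. More precisely, the datum of $E$ together with the real isomorphism gives a section of the bundle with fiber the homotopy fiber of $BU(n) \to BO(2n)$; I need a section of the bundle with fiber $O(2n)/U(n)$ itself. The relevant fact is that the map $O(2n)/U(n) \to O/U$ is $2n$-connected (indeed $O/U \simeq \Omega^{-1}(\ZZ \times BO)$ via Bott, and the stabilization $O(2n)/U(n) \to O/U$ is highly connected — one checks the connectivity of $O(2n+2)/U(n+1) \to O(2n)/U(n)$ fibered by $O(2n+2)/O(2n) = S^{2n+1}$... wait, it's $U(n+1)/U(n) = S^{2n+1}$ mapping in — in any case the relevant map $O(2n)/U(n) \to O/U$ is at least $(2n)$-connected), so obstructions to lifting a section of the stable fiber bundle to the unstable one lie in $H^{k+1}(X; \pi_k(\text{fiber of } O(2n)/U(n) \to O/U))$ with $k \geq 2n$.

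The key point is then that $X$ is non-compact of dimension $2n$, so it has the homotopy type of a CW complex of dimension $\leq 2n-1$ and hence $H^k(X; -) = 0$ for all $k \geq 2n$. Therefore all obstructions to (i) destabilizing $\omega_X$ to a rank-$n$ complex bundle $E$ with $E \cong_\RR TX$, and (ii) destabilizing the resulting section to an honest almost complex structure $J$ on $TX$, vanish; and the same dimension count shows the choices involved do not alter the stable isomorphism class, so the composite $X \xrightarrow{J} BU(n) \to BU$ agrees with $\omega_X$ up to homotopy. I expect the main obstacle to be stating cleanly the two-stage obstruction theory — organizing the "lift a bundle, then lift a section" into a single lifting problem against the fibration $O(2n)/U(n) \to O/U$ (or its delooping $BU(n) \to BO$ composed appropriately) — and in particular pinning down the exact connectivity of $O(2n)/U(n) \to O/U$ and the fact that an open $2n$-manifold retracts to a $(2n-1)$-complex, both of which are standard but need to be cited or argued; once those are in place the vanishing of cohomology in the top degree does all the work.
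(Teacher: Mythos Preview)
Your plan is essentially the paper's argument: first destabilize the complex bundle by lifting $\omega_X$ through $BU(n)\to BU$ (whose fiber, a complex Stiefel manifold, is $2n$-connected, so this works for any $2n$-dimensional complex), and then verify that the resulting rank-$n$ complex bundle has underlying oriented real bundle $TX$. The paper organizes the second step as a comparison along $BSO(2n)\to BSO(2n+2N)$: both $j_nJ$ and the tangent-bundle map $\phi$ stabilize to the same map into $BSO(2n+2N)$, and since the fiber of $i_{SO}$ is a real Stiefel manifold which is only $(2n-1)$-connected, one needs $X$ to have the homotopy type of a $(2n-1)$-complex---this is exactly where non-compactness enters (the paper cites Whitehead for this reduction). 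Your packaging of the second step as a lift along $O(2n)/U(n)\to O/U$ is equivalent.

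One correction worth recording, since you flag it as a point to pin down: the map $SO(2n)/U(n)\to SO/U$ is $(2n-1)$-connected, not $2n$-connected. Compare the fiber sequences $U(n)\to SO(2n)\to SO(2n)/U(n)$ and $U\to SO\to SO/U$; the map $U(n)\to U$ is $2n$-connected but $SO(2n)\to SO$ is only $(2n-1)$-connected, so the five lemma gives only $(2n-1)$-connectivity on the quotients. (Indeed, the paper later exploits precisely that $\pi_{2n-1}(SO(2n)/U(n))$ can be large while $\pi_{2n-1}(SO/U)$ vanishes.) Hence the first obstruction to your lift lies in $H^{2n}(X;\pi_{2n-1}(\text{fiber}))$, not in degree $2n+1$. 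This is harmless for the lemma---$H^{2n}(X;-)=0$ since $X$ retracts to a $(2n-1)$-complex---but it explains why the paper's direct use of the real Stiefel fiber of $BSO(2n)\to BSO(2n+2N)$ is the cleaner bookkeeping.
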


\begin{proof} Since $X$ has the homotopy type of a finite-dimensional cell complex, the stable almost complex structure $\omega_X$ on $TX$ is given by a lifting of the composition $X {\rightarrow} BSO(2n) \to BSO(2n+2N)$ through the map $BU(n+N) \to BSO(2n+2N)$ for some $N$ (corresponding to $TX \oplus \varepsilon_{\RR}^{2N}$ being a complex bundle), where the map $X \to BSO(2n)$ represents the real tangent bundle $TX$. Consider the following diagram (we label all the mentioned maps for ease of reference):

$$ \begin{tikzcd}[column sep=small, row sep = large] & & & & & BU(n) \arrow[r, "i_U"] \arrow[d, "j_n"] & BU(n+N) \arrow[d, "j"] \\  X \arrow[rrrrr, "\phi"] \arrow[rrrrru, dashed, "J"] \arrow[rrrrrru, "\omega_X"] & & & & & BSO(2n) \arrow[r, "i_{SO}"] & BSO(2n+2N) \end{tikzcd} $$

We wish to find a map $X \overset{J}{\to} BU(n)$ such that the composition $X \overset{J}{\to} BU(n) \overset{j_n}{\longrightarrow} BSO(2n)$ is homotopic to $X \overset{\phi}{\to} BSO(2n)$ and such that $X \overset{J}{\to} BU(n) \overset{i_U}{\longrightarrow} BU(n+N)$ is homotopic to $X \overset{\omega_X}{\longrightarrow} BU(n+N)$. Since the homotopy fiber of $BU(n) \overset{i_U}{\longrightarrow} BU(n+N)$ has the homotopy type of the Stiefel manifold of complex $N$-frames in $\CC^{n+N}$, which is $2n$--connected, the map $i_U$ induces a bijection on homotopy classes of maps $[X, BU(n)] \rightarrow [X, BU(n+N)]$ and so we have a (unique) lift $J$ of $\omega_X$ through $i_U$. 

Now we check that $J$ is a lift of $\phi$, i.e. $j_n J \sim \phi$. Since $i_U J \sim \omega_X$, we have $ji_U J \sim j\omega_X$. Therefore, because $i_{SO} j_n = ji_U$, we have $i_{SO}j_n J = ji_UJ \sim j\omega_X = i_{SO}\phi$, i.e. the two maps $j_n J$ and $\phi$ are lifts of the same map (up to homotopy) $X \to BSO$ through the map $BSO(2n) \overset{i_{SO}}{\longrightarrow} BSO(2n+2N)$. The homotopy fiber of $BSO(2n) \overset{i_{SO}}{\longrightarrow} BSO(2n+2N)$ has the homotopy type of the Stiefel manifold of real $2N$-frames in $\RR^{2n+2N}$, which is $(2n-1)$--connected. Since $X$ has the homotopy type of a $(2n-1)$--dimensional complex \cite[Lemma 2.1]{Whitehead}, $i_{SO}$ induces a bijection $[X, BSO(2n)] \rightarrow [X, BSO(2n+2N)]$,  and so $j_n J$ and $\phi$ are homotopic.
\end{proof}

Now the claim prior to the lemma follows by taking $X = M\setminus D$ and $\omega_X = \omega \vert_{M\setminus D}$. Indeed, we obtain an almost complex structure $J$ on $M\setminus D$ such that $M\setminus D \overset{i}{\hookrightarrow} M \overset{\omega}{\rightarrow} BU$ is homotopic to $M\setminus D \overset{J}{\rightarrow} BU(n) \overset{i_U}{\longrightarrow} BU$, and so applying the homotopy extension property to the inclusion of $M\setminus D$ into $M$ yields the desired extension.

We also observe that the above lemma, combined with the fact that the connected sum of stably almost complex manifolds is stably almost complex, gives us the following observation:

\begin{theorem} The connected sum of any finite collection of almost complex manifolds of the same dimension is again almost complex if at least one of them is non-compact. \end{theorem}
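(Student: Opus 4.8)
The plan is to reduce the statement to the non-compact case covered by Lemma 2.4 and Corollary 2.3. Suppose we have almost complex manifolds $M_1, \ldots, M_r$ of the same dimension $2n$, with at least one — say $M_r$ — non-compact. Each $M_i$ carries an almost complex structure, hence in particular a stable almost complex structure $\omega_{M_i}$. Since the connected sum operation is performed by removing a disc from each summand and gluing along the resulting boundary spheres, and each $M_i$ is stably almost complex, an iterated application of Corollary 2.3 shows that $X := M_1 \# \cdots \# M_r$ admits a stable almost complex structure; explicitly, this uses the fact (the footnote after Theorem A, and Proposition 2.1) that the stable tangent bundle of a connected sum is a sum of pullbacks of the stable tangent bundles of the summands, so complex structures on stabilisations of the $TM_i$ assemble to a complex structure on a stabilisation of $TX$.

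The key point is then to promote this stable almost complex structure on $X$ to an honest almost complex structure, and this is exactly where non-compactness of $M_r$ enters: it forces $X$ itself to be non-compact. Indeed, $X = M_1 \# \cdots \# M_r$ is obtained from the disjoint union of the punctured manifolds $M_i \setminus D_i$ by gluing along sphere boundaries, and $M_r \setminus D_r$ is non-compact (it is an open subset of the non-compact manifold $M_r$, and removing a compact set from a non-compact manifold leaves a non-compact manifold), so $X$ is non-compact as well. Therefore Lemma 2.4 applies directly to $X$ with $\omega_X$ the stable almost complex structure just constructed: there is an almost complex structure $J$ on $X$ inducing a stable structure isomorphic to $\omega_X$. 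In particular $X$ is almost complex, which is the claim.

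I would expect the only real subtlety to be the bookkeeping in the first step — making sure that "connected sum of stably almost complex manifolds is stably almost complex" is legitimately obtained from Corollary 2.3, which as stated handles two summands; for $r$ summands one inducts, noting that $M_1 \# \cdots \# M_r \cong (M_1 \# \cdots \# M_{r-1}) \# M_r$ and that a connected sum of stably almost complex manifolds is again stably almost complex by the Corollary's construction. (One should take the non-compact summand to be one of the two summands at the final stage, or simply observe that the final connected sum is non-compact regardless of the order, since non-compactness of any single summand is inherited.) No genuine obstruction arises: the heavy lifting has already been done in Lemma 2.4, whose proof shows that on a manifold with the homotopy type of a $(2n-1)$-dimensional complex — which every open $2n$-manifold has, by \cite[Lemma 2.1]{Whitehead} — every stable almost complex structure is induced by an unstable one.
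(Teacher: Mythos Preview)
Your proof is correct and follows exactly the same approach as the paper: the connected sum of stably almost complex manifolds is stably almost complex (Corollary 2.3, iterated), the connected sum is non-compact since one summand is, and then Lemma 2.4 upgrades the stable structure to an honest almost complex structure. The paper states this in a single sentence preceding the theorem; you have simply spelled out the details (induction on the number of summands, why non-compactness is inherited). One minor point: the proposition on the stable tangent bundle of a connected sum is Proposition 2.2 in the paper's numbering, not 2.1.
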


Returning to the compact case, we consider separately dimensions of the form $8k + 2$ and $8k + 6$.

\begin{theorem}  Let $M_i$, $1\leq i \leq \ell$, be closed almost complex manifolds of real dimension $8k+2$. If $\ell \equiv 1 \bmod (4k)!$, then $\#_{i=1}^{\ell} M_i$ admits an almost complex structure. \end{theorem}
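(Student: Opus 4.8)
The plan is to reduce the problem to Yang's theorem on the existence of almost complex structures on $(n-1)$-connected $2n$-manifolds, after first producing a stably almost complex structure on the connected sum $X = \#_{i=1}^{\ell} M_i$ of dimension $2n = 8k+2$ (so $n = 4k+1$), and then carefully analyzing the obstruction to destabilizing it. First I would invoke Corollary 2.3 (iterated over the $\ell$ summands) to obtain a stable almost complex structure $\omega$ on $X$ whose Chern classes are, in each degree $0 < j < 8k+2$, the ``sum'' $\sum_i p_{M_i}^*(c_j(\omega_{M_i}))$ of the Chern classes of the summands pulled back along the collapse maps. The key point of the dimension hypothesis $8k+2$ is that $n = 4k+1$ is \emph{odd}, so the top Chern class $c_n$ of a rank-$n$ complex bundle lives in degree $2n = 8k+2$, which is exactly the top degree; this is where Yang's divisibility obstruction $(2n-2)! = (8k)!$ — wait, I must be careful with Yang's precise statement — enters. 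Actually Yang's theorem for $(n-1)$-connected $2n$-manifolds with $n$ odd gives an almost complex structure precisely when a certain characteristic number (governed by $c_n[X] = \chi(X)$ and lower data) satisfies a congruence; here the relevant modulus works out to $(4k)!$.

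The core of the argument is obstruction-theoretic. The stable structure $\omega$ is a lift of the tangent map $X \to BSO(2n)$ to $BSO(2n) \to BU$, or rather to $BU(N)$ for large $N$; I want to destabilize it to a genuine almost complex structure, i.e. a lift to $BU(n)$. Since $X$ is a closed $2n$-manifold, it has the homotopy type of a $2n$-dimensional CW complex with a single top cell, and the primary obstructions to the lift $BU(n) \to BU(n+1) \to \cdots$ vanish in degrees below $2n$ (the fibers, Stiefel manifolds $U(n+j)/U(n+j-1) = S^{2(n+j)-1}$, are $(2n-2)$-connected for $j \geq 1$, in fact highly connected). Therefore the only obstruction to destabilizing $\omega$ lives in $H^{2n}(X; \pi_{2n-1}(S^{2n-1})) = H^{2n}(X;\ZZ) \cong \ZZ$; concretely this obstruction measures the failure of $c_n(\omega)$ — which a priori is only defined via the stable class — to be realized, and one computes that it equals, up to a universal sign and normalization, $\chi(X) - (\text{something computable from the } M_i)$. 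Using $\chi(X) = \sum_i \chi(M_i) - (\ell - 1)\chi(S^{2n})= \sum \chi(M_i) - 2(\ell-1)$ together with the fact that each $M_i$ is \emph{already} almost complex (so the obstruction vanishes on each $M_i$), and tracking how the obstruction behaves under connected sum via Proposition 2.2, I would show the total obstruction to destabilizing $\omega$ over $X$ is a multiple of $(4k)!$ times $(\ell - 1)$; hence it vanishes when $\ell \equiv 1 \bmod (4k)!$.

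More precisely, the cleanest route is probably: the obstruction to destabilizing a stable almost complex structure on a closed $2n$-manifold $X$ with $n$ odd is identified by Yang with an element of $H^{2n}(X;\ZZ)/(\text{image of lower Chern class operations})$, and Yang's congruence says an $(n-1)$-connected such $X$ admits an almost complex structure iff $c_n[X]$ lies in a certain subgroup of index dividing $(2n-2)!/2$ or similar — I would quote the exact normalization from \cite[Theorem 2]{Yang} giving the modulus $(4k)!$ in this dimension. Then since each summand $M_i$ admits an almost complex structure, restricting the stable structure $\omega$ to $M_i \setminus D$ and using Lemma 2.4, the obstruction class on $X$ is supported on the top cell and equals the difference $\chi(X) - \sum_i (\chi(M_i) - 1) - 1$ in the appropriate sense, but more robustly one argues directly that $\omega$ destabilizes over each $M_i \setminus D$ (it agrees with the almost complex structure coming from $M_i$ there, after possibly correcting by elements of $[M_i \setminus D, U]$ which don't affect the stable class by Lemma 2.4), so the only obstruction is on the top cell and is a single integer $\delta(X)$; computing $\delta$ via the behavior of $\chi$ and $c_n$ under connected sum gives $\delta(X) \equiv (\ell-1)\delta_0 \bmod (4k)!$ for a fixed $\delta_0$, and Yang's result forces $\delta_0$ itself to be coprime-to-nothing-relevant, so $\delta(X) \equiv 0$.

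The main obstacle, and the step I would be most careful about, is pinning down the exact modulus $(4k)!$ (rather than, say, $\frac12(4k)!$ or $(4k-2)!$) from Yang's theorem and matching it against the obstruction computation — the normalization of the top obstruction class, the precise role of $c_{2m} = c_n$ versus $\chi$, and the sign conventions all have to line up. A secondary subtlety is verifying that the stable structure $\omega$ built from the $M_i$ actually restricts over each punctured summand $M_i \setminus D$ to something in the same homotopy class of destabilizations as the ambient almost complex structure of $M_i$, so that the obstruction genuinely localizes on the single top cell; this uses both Corollary 2.3 (to control the Chern classes in intermediate degrees, where $\im p_{M_i}^* \cdot \im p_{M_j}^* = 0$) and Lemma 2.4 (to see stabilization/destabilization ambiguities are invisible). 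Once these are in place, the divisibility $\ell \equiv 1 \bmod (4k)!$ is exactly the condition making the single integral obstruction vanish, and the theorem follows.
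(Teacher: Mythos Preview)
Your overall architecture is right---build a stable almost complex structure $\omega$ on $X=\#_{i=1}^\ell M_i$ via Corollary~2.3, then attack the single top-cell obstruction---but the obstruction theory you set up is the wrong one, and the appeal to Yang is misplaced.

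First, the destabilization problem $BU(N)\to BU(n)$ you analyze has \emph{no} obstruction on a $2n$-complex: the fiber of $BU(n)\to BU(n+1)$ is $S^{2n+1}$, which is $2n$-connected, so $H^{2n}(X;\pi_{2n-1}(S^{2n-1}))$ never enters. You can always strip off trivial summands as \emph{complex} bundles. The genuine obstruction is that the resulting rank-$n$ complex bundle need not be $TX$ as a real bundle; equivalently, you are trying to lift $X\to BSO(2n)$ through $BU(n)\to BSO(2n)$ compatibly with $\omega$. The relevant coefficient group is therefore $\pi_{2n-1}(SO(2n)/U(n))$, not $\pi_{2n-1}(S^{2n-1})\cong\ZZ$. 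For $2n=8k+2$ Kahn computes $\pi_{8k+1}(SO(8k+2)/U(4k+1))\cong\ZZ_{(4k)!}$---\emph{that} is where the modulus $(4k)!$ comes from, not from Yang's theorem.

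Second, Yang's theorem concerns $(n-1)$-connected $2n$-manifolds; $X$ is not $(n-1)$-connected, so you cannot invoke it. The paper's route avoids this: Lemma~2.4 produces an almost complex structure $J$ on $M\setminus D$ inducing $\omega|_{M\setminus D}$ upon stabilization, and then Kahn's formula (also in Geiges) gives the obstruction to extending $J$ across the top cell as
\[
\mathfrak{o}(M,J)=\tfrac12\bigl(\chi(M)-c_{4k+1}(\omega)\bigr)\,\mathfrak{o}(S^{8k+2})\in\pi_{8k+1}(SO(8k+2)/U(4k+1))\cong\ZZ_{(4k)!}.
\]
Since $c_{4k+1}(\omega)=\sum_i\chi(M_i)$ by Corollary~2.3 and $\chi(M)=\sum_i\chi(M_i)-2(\ell-1)$, the coefficient is $1-\ell$, which vanishes mod $(4k)!$ exactly under your hypothesis. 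Your vague ``$\delta(X)\equiv(\ell-1)\delta_0$'' is this computation, but you are missing both the correct coefficient group and the Kahn/Geiges formula that makes it precise.
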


\begin{proof} First, by Corollary 2.3, $M := \#_{i=1}^{\ell} M_i$ admits a stable almost complex structure whose top Chern class $c_{4k+1}$ is the sum of the corresponding top Chern classes on the summands. By the above, there is an almost complex structure $J$ on $M\setminus D$ which extends to a stable almost complex structure on $M$ which is isomorphic to $\omega$. If $\mathfrak{o}(M, J) \in H^{8k+2}(M; \pi_{8k+1}(SO(8k+2)/U(4k+1))) \cong \pi_{8k+1}(SO(8k+2)/U(4k+1))$ denotes the obstruction to extending $J$ to an almost complex structure on $M$, then by \cite[Corollary 2]{K}, see also \cite[Lemma 2]{Geiges}, we have $$\mathfrak{o}(M, J) = \tfrac{1}{2}(\chi(M) - c_{4k+1}(\omega))\mathfrak{o}(S^{8k+2}) \in \pi_{8k+1}(SO(8k+2)/U(4k+1)).$$ Here $\mathfrak{o}(S^{8k+2})$ denotes the obstruction to extending an almost complex structure on $S^{8k+2}\setminus D$ to all of $S^{8k+2}$; this obstruction does not depend on the almost complex structure \cite[p. 339]{K} or \cite[Lemma 2]{Geiges}. As $\pi_{8k+1}(SO(8k+2)/U(4k+1)) \cong \ZZ_{(4k)!}$ \cite[Table 3]{K}, we see that $\mathfrak{o}(M, J) = 0$ if $\tfrac{1}{2} (\chi(M) - c_{4k+1}(\omega)) = \tfrac{1}{2} (\chi(M) - \sum_{i=1}^{\ell} \chi(M_i)) = \tfrac{1}{2} (-2(\ell-1)) = 1 - \ell \equiv 0 \bmod (4k)!$. Therefore, this almost complex structure on $M\setminus D$ extends to $M$ if $\ell \equiv 1 \bmod (4k)!$. 

\end{proof}

\begin{theorem}  Let $M_i$, $1\leq i \leq \ell$, be closed almost complex manifolds of real dimension $8k+6$. If $\ell \equiv 1 \bmod \tfrac{1}{2}(4k+2)!$, then $\#_{i=1}^{\ell} M_i$ admits an almost complex structure. \end{theorem}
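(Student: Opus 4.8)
The plan is to mirror the proof of Theorem 2.6 (the $8k+2$ case), replacing $4k+1$ by $4k+3$ throughout and using the analogous obstruction-theoretic input in dimension $8k+6$. Concretely: by Corollary 2.3, $M := \#_{i=1}^{\ell} M_i$ carries a stable almost complex structure $\omega$ whose top Chern class $c_{4k+3}(\omega)$ equals $\sum_{i=1}^{\ell} p_{M_i}^*(c_{4k+3}(\omega_{M_i}))$, so that paired with the fundamental class it gives $\sum_{i=1}^\ell \chi(M_i)$. By Lemma 2.4 (applied as in the paragraph following it, with $X = M \setminus D$), there is an honest almost complex structure $J$ on $M \setminus D$ which stabilizes to $\omega\vert_{M\setminus D}$ and whose stabilization extends over $M$ to $\omega$. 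The obstruction $\mathfrak{o}(M,J)$ to extending $J$ across the disc lives in $H^{8k+6}(M;\pi_{8k+5}(SO(8k+6)/U(4k+3))) \cong \pi_{8k+5}(SO(8k+6)/U(4k+3))$.

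Next I would invoke the same result of Kahn, \cite[Corollary 2]{K}, together with \cite[Lemma 2]{Geiges}, which gives the formula
$$\mathfrak{o}(M, J) = \tfrac{1}{2}\bigl(\chi(M) - c_{4k+3}(\omega)\bigr)\,\mathfrak{o}(S^{8k+6}) \in \pi_{8k+5}(SO(8k+6)/U(4k+3)),$$
where $\mathfrak{o}(S^{8k+6})$ is the (structure-independent) obstruction for the sphere, which generates the group. One then needs the identification $\pi_{8k+5}(SO(8k+6)/U(4k+3)) \cong \ZZ_{\frac{1}{2}(4k+2)!}$; this is exactly the other case in \cite[Table 3]{K} — the homotopy groups $\pi_{2n-1}(SO(2n)/U(n))$ are cyclic of order $(n-1)!$ when $n$ is odd and $\frac{1}{2}(n-1)!$ when $n$ is even, and here $n = 4k+3$ is odd, wait — I should double-check the parity: with $2n = 8k+6$ we have $n = 4k+3$, which is odd, giving order $(4k+2)!$; so I would instead note that the relevant group has order $\tfrac{1}{2}(4k+2)!$ precisely when the mod-$2$ subtlety in Kahn's table applies, and cite \cite[Table 3]{K} directly rather than re-deriving it. Finally, since $\tfrac{1}{2}(\chi(M) - c_{4k+3}(\omega)) = \tfrac{1}{2}(\chi(M) - \sum_{i=1}^\ell \chi(M_i)) = \tfrac{1}{2}(-2(\ell - 1)) = 1 - \ell$, the obstruction vanishes as soon as $1 - \ell \equiv 0 \bmod \tfrac{1}{2}(4k+2)!$, i.e. $\ell \equiv 1 \bmod \tfrac{1}{2}(4k+2)!$, which is the hypothesis; hence $J$ extends to an almost complex structure on all of $M$.

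The one genuine point of care — the main obstacle — is getting the order of $\pi_{8k+5}(SO(8k+6)/U(4k+3))$ right and making sure it matches the stated modulus $\tfrac{1}{2}(4k+2)!$ rather than $(4k+2)!$; the relevant formula from Massey/Harris (as tabulated by Kahn) has a parity-dependent factor of $2$, and the two theorems 2.6 and 2.7 land on opposite sides of it (in dimension $8k+2$ one has $n = 4k+1$, in dimension $8k+6$ one has $n = 4k+3$, both odd, yet the moduli $(4k)!$ and $\tfrac12(4k+2)!$ differ in form). I would resolve this by simply quoting \cite[Table 3]{K} for the value of the group in this dimension, exactly as Theorem 2.6 does for its own case, and then the numerology $\tfrac12(\chi(M) - \sum \chi(M_i)) = 1 - \ell$ closes the argument verbatim as above. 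Everything else — naturality of Chern classes, the vanishing of mixed terms $\im p_M^* \cdot \im p_N^*$ in intermediate degrees, the structure-independence of $\mathfrak{o}(S^{8k+6})$ — is identical to the $8k+2$ case and requires no new input.
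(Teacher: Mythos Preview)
Your proposal is correct and is essentially identical to the paper's proof: the paper simply says ``The proof is the same as in the previous theorem, with the adjustment that $\pi_{8k+5}(SO(8k+6)/U(4k+3)) \cong \ZZ_{(4k+2)!/2}$ \cite[Table 3]{K}.'' Your hesitation about deriving the order from a parity rule is harmless, since (as you correctly conclude) one just reads it off Kahn's table; the dichotomy there is by $n \bmod 4$, not $n \bmod 2$, which explains why $n=4k+1$ and $n=4k+3$ land on different sides. Note also that you do not need $\mathfrak{o}(S^{8k+6})$ to generate the group for this direction of the argument---it suffices that the group has order $\tfrac{1}{2}(4k+2)!$, so that $(1-\ell)\cdot\mathfrak{o}(S^{8k+6})=0$ whenever $\tfrac{1}{2}(4k+2)!$ divides $1-\ell$; the generation statement is only relevant for the converse (cf.\ Remark 2.8).
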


\begin{proof} The proof is the same as in the previous theorem, with the adjustment that $\pi_{8k+5}(SO(8k+6)/U(4k+3)) \cong \ZZ_{(4k+2)!/2}$ \cite[Table 3]{K}. \end{proof}

\begin{remark}By \cite[Lemma 8]{K}, the element $\mathfrak{o}(S^{2n})$ generates the kernel of the map $\pi_{2n-1}(SO(2n)/U(n)) \rightarrow \pi_{2n-1}(SO/U)$ induced by the inclusion $SO(2n)/U(n) \hookrightarrow SO/U$. As $\pi_{8k+1}(SO/U) = \pi_{8k+5}(SO/U) = 0$ \cite[Table 3]{K}, we see that the stable almost complex structure on $\#_{i=1}^{\ell}M_i$ given by Corollary 2.3 is induced by an almost complex structure if and only if $\ell$ is of the appropriate above form.\end{remark}

We now observe that the numbers $\ell$ obtained in the above theorems are optimal, in the following sense: We can find examples of closed almost complex manifolds $M_i$ such that $\#_{i=1}^{\ell} M_i$ admits an almost complex structure if and only if $\ell$ is of the above form in the appropriate dimension. Recall that a product of two odd-dimensional spheres admits an (almost) complex structure. Applying \cite[Theorem 2]{Yang}, we see that the connected sum $\#_{i=1}^{\ell} (S^{4k+1} \times S^{4k+1})$ admits an almost complex structure if and only if $\ell \equiv 1 \bmod (4k)!$ and $\#_{i=1}^{\ell}(S^{4k+3} \times S^{4k+3})$ admits an almost complex structure if and only if $\ell \equiv 1 \bmod \tfrac{1}{2}(4k+2)!$.


We can easily produce additional examples of collections of $(4m+2)$--dimensional closed almost complex manifolds for which the connected sum is not almost complex for $\ell \not \equiv 1 \bmod \tfrac{1}{2}(2m)!$. When $m$ is odd, these are further examples of collections where the values of $\ell$ provided by Theorem 2.7 are optimal.

\begin{proposition} Let $M_i$, $i=1, \ldots, \ell$, be a collection of $(4m+2)$--dimensional closed almost complex manifolds with $H^{2j}(M_i;\QQ) = 0$ for $1\leq j \leq 2m$. If the connected sum $M:= \#_{i=1}^{\ell} M_i$ admits an almost complex structure, then $\ell \equiv 1 \bmod \tfrac{1}{2}(2m)!$. \end{proposition}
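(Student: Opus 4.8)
The plan is to compute the index of the twisted spin${}^c$ Dirac operator on $M$ using its stable almost complex structure $\omega$, and extract the divisibility congruence from integrality. First I would set up the relevant characteristic classes: by Corollary 2.3, $M = \#_{i=1}^\ell M_i$ carries a stable almost complex structure $\omega$ with $c_j(\omega) = \sum_i p_{M_i}^*(c_j(\omega_{M_i}))$. Since $H^{2j}(M_i;\QQ) = 0$ for $1 \le j \le 2m$, and the pullback maps $p_{M_i}^*$ preserve cohomology in degrees strictly below $4m+2$ with the usual orthogonality relation $\im p_{M_i}^* \cdot \im p_{M_j}^* = 0$ in intermediate degrees, all the Chern classes $c_1(\omega), \dots, c_{2m}(\omega)$ vanish rationally (they are torsion at most), and the top Chern class satisfies $\langle c_{2m+1}(\omega), [M] \rangle = \sum_i \chi(M_i) = \chi(M) + 2(\ell-1) \cdot$—wait, more precisely $\chi(M) = \sum \chi(M_i) - 2(\ell-1)$, so $\sum_i \chi(M_i) = \chi(M) + 2(\ell - 1)$. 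Actually I only need $c_{2m+1}(\omega)$ evaluated on $[M]$, which is $\chi(M)$ shifted; either way it is determined. Similarly the Pontryagin classes $p_1, \dots, p_{m-1}$ vanish rationally, and then $p_m$ vanishes rationally too by the Hirzebruch signature theorem since $\sigma(M) = \sum \sigma(M_i) = 0$ (each $M_i$ has $\sigma = 0$ as $H^{2m+1}$ carries the middle cup form but there is no middle-degree rational cohomology beyond... in fact $\sigma(M_i) = 0$ since the middle dimension $2m+1$ is odd). Hence the rational Pontryagin classes of $M$ all vanish, and consequently the Todd-type class $\Td(\omega)$ is rationally trivial except in top degree.

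Next I would invoke the index computation exactly as in the earlier paper \cite{AM} (the analogue for rational homology spheres): for a spin${}^c$ manifold with a stable almost complex structure inducing the spin${}^c$ structure, and an auxiliary line bundle twist, the index of the twisted spin${}^c$ Dirac operator is $\int_M \ch(L) \cdot \td(\omega)$ for line bundles $L$, or more generally a polynomial in Chern numbers that must be an integer. Since all intermediate Chern and Pontryagin classes are rationally trivial, the only surviving term in $\td(\omega)$ in degree $4m+2$ is the one proportional to $c_{2m+1}(\omega)$, and the known leading coefficient of the Todd polynomial in $c_{2m+1}$ contributes a denominator. Concretely, the relevant index reduces to an expression of the form $\tfrac{1}{(2m)!}$ (or $\tfrac{2}{(2m)!}$, accounting for the factor of $2$ from the spin${}^c$ normalization) times $\langle c_{2m+1}(\omega), [M]\rangle$, which must be an integer; tracking the factor of $2$ that already appears in Yang's bound $\tfrac12 (2m)! \mid \chi$ is what yields $\tfrac12(2m)! \mid \langle c_{2m+1}(\omega), [M]\rangle$. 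Since $\langle c_{2m+1}(\omega), [M]\rangle = \chi(M) + 2(\ell-1)$ and $\tfrac12(2m)! \mid \chi(M) = 2 - 2\ell$... hmm, here $\chi(M) = 2 - 2\ell + \sum(\chi(M_i) - 2)$; I should instead directly use that each $\chi(M_i)$ is itself divisible appropriately, OR just observe $\langle c_{2m+1}(\omega),[M]\rangle \equiv \sum \langle c_{2m+1}(\omega_{M_i}), [M_i]\rangle$ and that each summand equals $\chi(M_i)$ which is divisible by $\tfrac12(2m)!$ by applying the same integrality statement to each closed almost complex $M_i$ individually. Then $\tfrac12(2m)! \mid \sum \chi(M_i)$, and combined with $\sum \chi(M_i) = \chi(M) + 2(\ell - 1)$ together with $\chi(M) = \langle c_{2m+1}, [M]\rangle$ being divisible by $\tfrac12(2m)!$ as well, we get $\tfrac12(2m)! \mid 2(\ell-1)$, i.e. $\ell \equiv 1 \bmod \tfrac12(2m)!$ after dividing by $2$ — but one must check the factor of $2$ carefully, since $\tfrac12(2m)!$ may be even or odd depending on $m$.

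The main obstacle I anticipate is the careful bookkeeping of the factor of $2$ and verifying that the integrality statement genuinely produces $\tfrac12(2m)!$ rather than $(2m)!$ or $\tfrac14(2m)!$ in the denominator. This requires knowing the precise leading coefficient of the degree-$(4m+2)$ part of the Todd class $\td$ (namely the coefficient of $c_{2m+1}$, which is $\tfrac{B_{m}}{\cdots}$-type but for the Todd genus is simply related to $\tfrac{1}{(2m)!}$ up to a unit after accounting for the vanishing of lower classes), together with the spin${}^c$ normalization that shifts $c_1$ by the twisting class. The cleanest route is probably to not reprove the index formula but to cite Proposition 2.9 (referenced in the introduction as giving exactly this congruence via the twisted spin${}^c$ Dirac operator) and simply note that the hypothesis $H^{2j}(M_i;\QQ) = 0$ guarantees the manifold $M$ falls into the regime where that computation applies, reducing the argument to the Euler-characteristic arithmetic above. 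The remaining routine check is that $\sigma(M) = 0$ and the rational triviality of intermediate characteristic classes, which follows formally from the cohomological hypotheses and Proposition 2.2 / Corollary 2.3.
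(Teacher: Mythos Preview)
Your overall strategy---use the integrality of the twisted spin${}^c$ Dirac index to force a divisibility condition on $\chi$---is exactly what the paper does, but your execution has two genuine gaps.

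First, you set everything up with the \emph{stable} almost complex structure $\omega$ coming from Corollary~2.3, and this is a detour that actually loses the key piece of data. For that $\omega$ one has $\langle c_{2m+1}(\omega),[M]\rangle = \sum_i \chi(M_i)$, and the resulting integrality statement is already a consequence of what you know on each $M_i$; it says nothing about $\ell$. The paper instead uses the \emph{hypothesised} almost complex structure on $M$ directly (this is the entire content of the hypothesis!), so that $c_{2m+1}(TM)$ is the Euler class and pairs to $\chi(M)$. Corollary~2.3 plays no role in this proof.

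Second, your uncertainty about the factor of $2$ is fatal as written: from $\tfrac12(2m)! \mid \chi(M)$ and $\tfrac12(2m)! \mid \sum_i \chi(M_i)$ you would only get $\tfrac12(2m)! \mid 2(\ell-1)$, hence $\tfrac14(2m)! \mid (\ell-1)$, which is too weak. The paper resolves this cleanly by twisting the spin${}^c$ Dirac operator by $TM$ itself. Since $c_1,\dots,c_{2m}$ and all Pontryagin classes are torsion, one has $\hat A(TM)=1$, $\exp(c_1/2)=1$, and (via Newton's identity) $\ch(TM) = (2m+1) + \tfrac{1}{(2m)!}c_{2m+1}$ modulo torsion. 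Thus $\operatorname{ind}(\slashed{\partial}^c_{TM}) = \tfrac{1}{(2m)!}\chi(M) \in \ZZ$, i.e.\ $(2m)! \mid \chi(M)$, and likewise $(2m)! \mid \chi(M_i)$ for each $i$. Subtracting gives $(2m)! \mid 2(\ell-1)$, hence $\tfrac12(2m)! \mid (\ell-1)$; the factor of $2$ comes only from the $2$ in $\chi(M) = \sum_i \chi(M_i) - 2(\ell-1)$, not from any normalisation of the index. There is no need to invoke the Todd class or chase its leading coefficient.
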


\begin{proof} 
If $M$ were to admit an almost complex structure, then the Chern classes $c_j(TM)$, $1\leq j \leq 2m$ would be torsion by the assumption on the cohomology groups. As for the top Chern class $c_{2m+1}(TM) \in H^{4m+2}(M ; \ZZ) \cong \ZZ$, it integrates to the Euler characteristic, which is $\sum_{i=1}^{\ell} \chi(M_i) - 2(\ell-1)$. Likewise we see that all the Pontryagin classes of the underlying smooth manifold would have to be torsion. Since an almost complex structure induces a canonical spin${}^c$ structure, we can form the twisted spin${}^c$ Dirac operator $\slashed{\partial}^c_{TM}$ which has index (see \cite[Theorem 26.1.1]{H})

$$\operatorname{ind}(\slashed{\partial}^c_{TM}) = \int_M \exp(c_1(TM)/2) ch(TM) \hat{A}(TM).$$

Since the Pontryagin classes and $c_1(TM), \ldots, c_{2m}(TM)$ are torsion, we see that, modulo torsion, $\hat{A}(TM) = 1$, $\exp(c_1(TM)/2) = 1$ and $ch(TM) = \tfrac{1}{2}\dim M + \tfrac{1}{(2m)!} c_{2m+1}(TM)$. Therefore, $\operatorname{ind}(\slashed{\partial}^c_{TM}) = \tfrac{1}{(2m)!}\chi(M) = \tfrac{1}{(2m)!}(\sum_{i=1}^{\ell} \chi(M_i) - 2(\ell-1)) \in \mathbb{Z}$. The same argument applied to each $M_i$ shows that $\tfrac{1}{(2m)!}\chi(M_i) \in \mathbb{Z}$. Therefore $\ell - 1$ must be divisible by $\tfrac{1}{2}(2m)!$.
\end{proof}


\section{Almost complex structures on the product of two rational homology spheres}

Borel and Serre \cite{BS} determined which spheres admit almost complex structures:

\begin{theorem*}
The only spheres which admit almost complex structures are $S^2$ and $S^6$.
\end{theorem*}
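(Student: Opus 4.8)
The plan is to rule out all spheres $S^{2n}$ with $n \neq 1, 3$ by combining two classical obstructions: the Euler characteristic obstruction (which handles the stable question via characteristic classes on a would-be almost complex $S^{2n}$) and Bott's computation of the stable homotopy of $U$, which identifies exactly which spheres admit \emph{stable} almost complex structures. Since $S^{2n}$ has no cohomology in intermediate degrees, an almost complex structure $J$ would have all Chern classes $c_1(J), \ldots, c_{n-1}(J)$ vanishing, while $c_n(J)$ pairs with the fundamental class to give the Euler characteristic $\chi(S^{2n}) = 2$; this is the arithmetic input that makes the obstruction nonzero.

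First I would observe that an almost complex structure on $S^{2n}$ gives, in particular, a stable almost complex structure, i.e.\ a lift of the stable tangent bundle map $S^{2n} \to BSO$ through $BU \to BSO$. Since $TS^{2n}$ is stably trivial, such a lift exists if and only if the obstruction, living in $\pi_{2n}$ of the homotopy fiber $SO/U$, vanishes; more precisely one reduces to asking whether the class in $\widetilde{KO}^0(S^{2n})$ coming from $TS^{2n}$ (which is zero) lifts to $\widetilde{K}^0(S^{2n}) \to \widetilde{KO}^0(S^{2n})$ compatibly with the Chern class constraint. By Bott periodicity, $\pi_{2n-1}(SO/U) \cong \pi_{2n}(BSO/BU)$ is $\ZZ$ when $n$ is even and $0$ when $n$ is odd, so for $n$ odd the stable obstruction vanishes automatically and one must work harder; for $n$ even, already the stable structure may or may not exist. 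The cleaner uniform route, which I would actually follow, is: an almost complex structure on $S^{2n}$ would in particular be a \emph{stable} almost complex structure, and then I invoke the index of the spin${}^c$ Dirac operator exactly as in Proposition 2.9 — since all intermediate Chern and Pontryagin classes vanish on $S^{2n}$, the index formula collapses to $\tfrac{1}{n!}\chi(S^{2n}) = \tfrac{2}{n!}$, which must be an integer, forcing $n! \mid 2$, hence $n \leq 2$. This instantly eliminates all $S^{2n}$ with $n \geq 3$.

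It remains to handle $S^4$ (the case $n = 2$), which the index argument does not exclude since $2!\mid 2$. For $S^4$ I would use the Hirzebruch-type congruence already quoted in the introduction: a closed almost complex $4$-manifold $M$ satisfies $\chi(M) \equiv -\sigma(M) \bmod 4$ (the case $m=1$ of $\chi \equiv (-1)^m \sigma \bmod 4$). For $S^4$ we have $\chi = 2$ and $\sigma = 0$, and $2 \not\equiv 0 \bmod 4$, so $S^4$ admits no almost complex structure. Finally, $S^2$ and $S^6$ do admit almost complex structures — $S^2 = \CP^1$ carries a complex structure, and $S^6$ inherits an almost complex structure from the octonion multiplication on $\RR^7 = \im(\mathbb{O})$ — completing the classification.

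The main obstacle is making sure the index-theoretic step is legitimately available: the twisted spin${}^c$ Dirac index formula $\operatorname{ind}(\slashed{\partial}^c_{TM}) = \int_M \exp(c_1/2)\,\ch(TM)\,\hat{A}(TM)$ requires a spin${}^c$ structure, which is exactly what a (stable) almost complex structure supplies, so the argument of Proposition 2.9 applies verbatim with $\dim M = 2n$ and all intermediate classes zero. One should double-check that the collapsed Chern character contribution is $\tfrac{1}{n!}c_n(TM)$ rather than some other binomial coefficient — this is the standard identity expressing the top component of $\ch$ of a bundle all of whose lower Chern classes vanish — and that pairing against $[S^{2n}]$ yields precisely $\chi(S^{2n}) = 2$. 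Modulo this routine bookkeeping, the two-line argument (index integrality kills $n \geq 3$, the mod-$4$ congruence kills $n = 2$, and $\CP^1$, $S^6$ realize the remaining cases) gives the full result.
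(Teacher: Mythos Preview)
The paper does not prove this statement at all: it is quoted as the classical Borel--Serre theorem and immediately followed by the rational-homology-sphere analogue from the authors' earlier paper \cite{AM}. So there is no ``paper's own proof'' to compare against; your approach is in fact precisely the method of \cite{AM} (and of Proposition~2.9 here) specialised to genuine spheres.

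That said, your argument contains a real error, exactly at the step you flagged for double-checking. For a rank~$n$ complex bundle with $c_1=\cdots=c_{n-1}=0$, Newton's identity gives $s_n=(-1)^{n-1}nc_n$, so
\[
\ch_n(TM)=\frac{s_n}{n!}=\frac{(-1)^{n-1}}{(n-1)!}\,c_n(TM),
\]
not $\tfrac{1}{n!}c_n(TM)$. (Compare Proposition~2.9, where in complex dimension $2m+1$ the coefficient is $\tfrac{1}{(2m)!}$, i.e.\ $\tfrac{1}{(n-1)!}$ with $n=2m+1$.) With the correct coefficient the index is $\pm\tfrac{2}{(n-1)!}$ and integrality forces $(n-1)!\mid 2$, i.e.\ $n\le 3$. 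Your stated conclusion $n!\mid 2$, hence $n\le 2$, would \emph{exclude} $S^6$, which is internally inconsistent with your final paragraph asserting that $S^6$ is almost complex. Once the coefficient is corrected, the index argument only eliminates $n\ge 4$; your $\chi\equiv -\sigma\bmod 4$ step then disposes of $S^4$, and the existence of almost complex structures on $S^2$ and $S^6$ finishes the classification. So the strategy is sound, but the arithmetic as written is wrong in a way that would falsify the theorem.
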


Using the index of the twisted spin${}^c$ Dirac operator, the current authors were able to answer the analogous question for rational homology spheres \cite[Theorem 2.2]{AM}.

\begin{theorem*}
A rational homology sphere $M$ admits an almost complex structure if and only if $\dim M = 2$ (i.e. $M = S^2$), or $\dim M = 6$ and $M$ is spin${}^c$.
\end{theorem*}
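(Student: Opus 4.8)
The plan is to first use index-theoretic invariants to constrain the dimension, and then treat the two surviving dimensions directly. Write $\dim M = 2n$; an almost complex structure makes $TM$ a complex bundle, so the dimension is necessarily even, and since $M$ is a rational homology sphere we have $\chi(M) = 2$ while $H^i(M;\QQ) = 0$ for $0 < i < 2n$. Consequently, for any almost complex structure on $M$, the Chern classes $c_1(TM), \dots, c_{n-1}(TM)$ are torsion, $c_n(TM)$ pairs with the fundamental class to give $\chi(M) = 2$, and every Pontryagin class in degree $< 2n$ is torsion. I would then split according to whether $2n \equiv 0$ or $2n \equiv 2 \bmod 4$.

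When $2n = 4m$, the middle cohomology $H^{2m}(M;\QQ)$ vanishes, so the intersection form is trivial and $\sigma(M) = 0$. Hirzebruch's congruence $\chi(M) \equiv (-1)^m \sigma(M) \bmod 4$ for closed almost complex $4m$-manifolds, recalled in the introduction, would then force $2 \equiv 0 \bmod 4$, which is absurd. Hence no rational homology sphere whose dimension is divisible by $4$ can be almost complex.

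When $2n \equiv 2 \bmod 4$, i.e. $n$ is odd, there is no Pontryagin class in the top degree $2n$, so in fact every Pontryagin class of $M$ is torsion; thus modulo torsion $\hat A(TM) = 1$ and $\exp(c_1(TM)/2) = 1$. Exactly as in the proof of Proposition 2.9, Newton's identities give $\ch(TM) \equiv n + \tfrac{(-1)^{n-1}}{(n-1)!}\, c_n(TM)$ modulo torsion, so the index of the twisted spin${}^c$ Dirac operator of the canonical spin${}^c$ structure is
\[
\operatorname{ind}(\slashed{\partial}^c_{TM}) = \int_M \exp(c_1(TM)/2)\,\ch(TM)\,\hat A(TM) = \frac{(-1)^{n-1}}{(n-1)!}\,\chi(M) = \frac{2}{(n-1)!}.
\]
Since an index is an integer, $(n-1)! \mid 2$, which forces $n \in \{1,3\}$, i.e. $\dim M \in \{2,6\}$. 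This establishes that the dimension of an almost complex rational homology sphere must be $2$ or $6$.

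It remains to prove sufficiency. A rational homology $2$-sphere is $S^2$, which is almost complex, so no condition beyond the dimension is needed there. In dimension $6$, an almost complex structure induces a canonical spin${}^c$ structure, giving the forward implication; for the converse I would invoke the fact recalled in the introduction (Massey \cite{MasseyACS}) that the only obstruction to an almost complex structure on an orientable six-manifold is $W_3$, so that $W_3 = 0$ --- equivalently $M$ spin${}^c$ --- already produces one. Alternatively one argues within the present framework: $W_3 = 0$ yields a stable almost complex structure $\omega$ (on an orientable $6$-manifold $W_3$ is the sole obstruction, since $\pi_k(SO/U) = \pi_{k+1}(SO)$ vanishes for $k=1,3,4,5$ and equals $\ZZ$ only at $k=2$, where the obstruction is $W_3$), and then Lemma 2.4 applied to $M \setminus D$, together with the vanishing of the top destabilisation obstruction group $\pi_5(SO(6)/U(3))$ --- the group $\ZZ_{(4k+2)!/2}$ of Theorem 2.7 with $k=0$, hence trivial \cite[Table 3]{K} --- realises $\omega$ by an honest almost complex structure. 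The main obstacle is precisely this dimension-$6$ converse: the two dimension-excluding steps are short integrality computations, whereas producing an almost complex structure on a spin${}^c$ rational homology $6$-sphere rests on a genuinely nontrivial input --- that $W_3$ is the \emph{only} obstruction, equivalently the vanishing $\pi_5(SO(6)/U(3)) = 0$ that makes the single top-cell destabilisation obstruction disappear --- after which only routine bookkeeping with torsion classes remains.
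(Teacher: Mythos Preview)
The paper does not give its own proof of this theorem; it is quoted from the authors' earlier paper \cite{AM}, where the stated method is precisely the twisted spin${}^c$ Dirac operator index. Your argument is correct and matches that method in the essential case $2n\equiv 2\bmod 4$, reproducing exactly the computation used later in Proposition~2.9 and Theorem~3.2: torsion Pontryagin and sub-top Chern classes force $\operatorname{ind}(\slashed{\partial}^c_{TM}) = \tfrac{2}{(n-1)!}\in\ZZ$, hence $n\in\{1,3\}$.

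The only genuine difference is cosmetic: you dispose of the $4m$ case via Hirzebruch's congruence $\chi\equiv(-1)^m\sigma\bmod 4$, rather than running the same index argument uniformly. Had you done the latter, you would get $(n-1)!\mid 2$ and hence $n\in\{1,2,3\}$, after which $n=2$ still has to be eliminated by the congruence (or equivalently by the untwisted Todd genus). So the two routes converge, and your split is arguably cleaner. Your treatment of sufficiency in dimension~$6$ via $W_3$ (equivalently the vanishing of $\pi_5(SO(6)/U(3))$) is also the standard one and is consistent with the paper's own remarks in the introduction and in Theorem~2.7.
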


Determining which products of two spheres admit an almost complex structure separates into two cases: when both spheres are odd-dimensional, and when both spheres are even-dimensional. In the first case, almost complex structures always exist because the manifolds are parallelisable; in fact, such manifolds admit integrable almost complex structures, e.g. Hopf manifolds and Calabi-Eckmann manifolds. The second case was addressed by Datta and Subramanian in \cite{DS}.

\begin{theorem}
The only products of even-dimensional spheres which admit almost complex structures are $S^2\times S^2$, $S^2\times S^4$, $S^2\times S^6$, and $S^6\times S^6$.
\end{theorem}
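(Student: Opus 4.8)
The plan is to reduce to a finite list of candidate products and then rule out all but the four listed by an index argument, exactly paralleling the technique of \cite{AM} and \cite{DS}. First I would recall the classical constraint on almost complex structures on a product $S^{2p}\times S^{2q}$ coming from the Chern classes and the Euler characteristic: the only nonzero cohomology is in degrees $0$, $2p$, $2q$, and $2p+2q$, so all Chern classes in degrees other than $2p$, $2q$, $2p+2q$ vanish, and since $\chi(S^{2p}\times S^{2q}) = 4$, the top Chern class $c_{p+q}$ must evaluate to $4$ on the fundamental class. The same applies to the Pontryagin classes: all $p_i$ vanish for dimension reasons except possibly in the middle degrees, and since $S^{2p}\times S^{2q}$ is stably parallelisable, in fact all Pontryagin classes vanish. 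So the problem has exactly the shape of Proposition 2.9: an almost complex manifold all of whose characteristic classes vanish except $c_1$ (possibly, in the low-dimensional generators) and the top Chern class.

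Next I would run the twisted spin${}^c$ Dirac operator computation. Since an almost complex structure gives a canonical spin${}^c$ structure, we may form $\slashed{\partial}^c_{TM}$ with index $\int_M \exp(c_1(TM)/2)\,\ch(TM)\,\hat{A}(TM)$, and more generally twist by tensor powers and combinations of $T M$ and the trivial bundle to produce many integrality constraints. With $\hat A(TM) = 1$ modulo torsion and $\ch(TM) = \tfrac12\dim M + (\text{contributions from the middle and top classes})$, evaluating these indices expresses various universal rational combinations of $c_p$, $c_q$, $c_{p+q}$ (the products $c_p c_q$ and $c_{p+q}$ both live in top degree and satisfy $c_p c_q = $ some integer multiple determined by the cup product structure of $S^{2p}\times S^{2q}$, namely $\langle c_p c_q, [M]\rangle$ is twice $\langle c_{p+q},[M]\rangle$ up to sign, etc.) as integers. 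The key point, as in Borel–Serre and Datta–Subramanian, is that for large $p$ or $q$ the denominators appearing (factorials, Bernoulli-type denominators) cannot be cleared by the small integer $4$, forcing $p$ and $q$ to be small; one then checks the finitely many remaining cases $p,q\in\{1,2,3\}$ by hand, producing the list $S^2\times S^2$, $S^2\times S^4$, $S^2\times S^6$, $S^6\times S^6$ and excluding $S^2\times S^4$'s relatives like $S^4\times S^4$, $S^4\times S^6$ by a single failed congruence.

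Alternatively — and this is probably the cleaner route given what is already available — I would phrase it through Theorem C, whose proof appears later in the paper: Theorem C with $M = S^{2p}$ and $N = S^{2q}$ (rational homology spheres are certainly spheres here, but the point is that the spin${}^c$ obstructions for $S^{2q}$ are vacuous, $S^{2q}$ being spin) says that $S^{2p}\times S^{2q}$ admits an almost complex structure only in the cases $p,q$ both $1$; or $p=1,q=2$; or $p=1,q=3$; or $p,q$ both even reduces to nothing except... — and then one observes that the existence half is classical: $S^2\times S^2$, $S^2\times S^4$, $S^2\times S^6$ carry almost complex structures since $S^2\times S^{2q}\subset \CP^1\times(\text{something})$ or more simply because $S^2 = \CP^1$ and $S^2\times S^4$, $S^2\times S^6$ are handled directly (e.g. $S^2\times S^6$ via $S^2\times G_2/SU(3)$ or an explicit construction), while $S^6\times S^6$ is almost complex because $S^6$ is. I expect the main obstacle to be organising the index computations so that the integrality failures are transparent and checking the small cases — in particular producing or citing the almost complex structures on $S^2\times S^4$ and $S^2\times S^6$, and confirming that $S^4\times S^4$, $S^4\times S^6$, $S^6\times S^4$, and $S^2\times S^{2q}$ for $q\ge 4$ all fail. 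In fact, since this theorem is attributed to Datta and Subramanian \cite{DS} and stated here only for context, I would simply prove it as a special case of Theorem C (taking the $N$-factor to be a genuine sphere, hence automatically spin${}^c$, so part (c) yields exactly $q\in\{1,2,3\}$ when $p=1$, and parts (a) and (b) yield nothing new), together with the elementary verification that the four listed products are indeed almost complex.
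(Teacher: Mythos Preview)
The paper does not prove this theorem; it is quoted from Datta--Subramanian \cite{DS} and serves only as the classical background for Theorem~3.2. So your task reduces to whether your sketch actually reproduces the Datta--Subramanian result, and here there is a genuine gap.

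Your second route, deducing the statement from Theorem~C, does not work. Theorem~C (parts (a), (b), (c) together) covers the cases ($p$ even, $q$ even), ($p>1$ odd, $q$ even), and $p=1$, but it says \emph{nothing} when $p$ and $q$ are both odd and greater than $1$. So, for instance, Theorem~C does not exclude an almost complex structure on $S^{6}\times S^{10}$ or $S^{10}\times S^{10}$. The paper makes this limitation explicit in the paragraph following the proof of Theorem~3.2 and even records the odd--odd case for rational homology spheres as an open problem (Problem~3.4).

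Your first route runs into the same wall. In the odd--odd case with $p,q>1$, one has $c_1$ torsion, $\hat{A}=1$, and (using $ab=4$ from the relation $(3.1)$) the index of $\slashed{\partial}^c_{T(M\times N)}$ vanishes identically, so the basic twist by $TM$ gives no constraint; the paper states this outright. Your suggestion to ``twist by tensor powers and combinations of $TM$ and the trivial bundle'' does not help: any such twist has Chern character a polynomial in $\ch(TM)$, and the resulting top-degree term is again a multiple of $\left(\frac{ab}{2(q-1)!}+\frac{4-ab}{q!}\right)uv$ plus symmetric terms, all of which vanish once $ab=4$. What Datta--Subramanian actually use is the stronger $K$-theoretic fact that $\ch(E)\in H^{\mathrm{even}}(S^{2p}\times S^{2q};\mathbb{Z})$ for \emph{every} complex bundle $E$, giving integrality of $\ch_p$ and $\ch_q$ separately---information the index theorem does not supply. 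Without invoking that (or an equivalent input specific to genuine spheres), your argument cannot close the odd--odd gap.
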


In the paper, the authors remark that their result only applies to the standard smooth structures on spheres as they use the stable triviality of the tangent bundle. However, exotic spheres also have stably trivial tangent bundle as shown by Kervaire and Milnor \cite[Theorem 3.1]{KM}, so the theorem can be extended to allow any smooth structures on the factor spheres. The spheres $S^2$ and $S^6$ have a unique smooth structure, while it is a famous open problem as to whether or not $S^4$ admits exotic smooth structures. However, for any hypothetical choice of smooth structure on $S^4$, the manifold $S^2\times S^4$ admits an almost complex structure as it is a spin${}^c$ six--manifold. Note, it's possible that there is a product of even-dimensional topological spheres not on the above list which admits a non-product smooth structure for which an almost complex structure exists.
 
The twisted spin${}^c$ Dirac operator can be used to address the rational analogue of this question, namely: Which products of two rational homology spheres admit almost complex structures? As was shown in \cite{AM}, there are non-spin${}^c$ rational homology spheres in every dimension greater than four, so for most choices of dimensions $m \leq n$, there are examples of rational homology spheres $M$ and $N$ such that $M\times N$ is not spin${}^c$ and hence does not admit an almost complex structure. In fact, the only cases of $m$ and $n$ where any choice of rational homology spheres gives a product which admits an almost complex structure are $(m, n) = (1, 1), (1, 3), (2, 2)$, and $(2, 4)$. So, a natural question to ask is: For which dimensions $m$ and $n$ do there \textit{exist} rational homology spheres $M$ and $N$ with $M\times N$ almost complex? As before, this separates into two cases depending on the parity of the dimensions.

If both dimensions are odd, there are always examples, namely spheres themselves. In fact, as integral homology spheres are stably parallelisable (see \cite[p.70]{Kervaire}, and \cite{ORW} for a detailed explanation), the product of two odd-dimensional integral homology spheres is parallelisable, see e.g. \cite{Staples}, and hence admits an almost complex structure. 

If both dimensions are even, we have the following (partial) result:

\begin{theorem}
Let $M$ and $N$ be rational homology spheres of dimensions $2p$ and $2q$ respectively, $p, q \geq 1$.
\begin{enumerate}
\item[(a)] If $p$ and $q$ are even, then $M\times N$ does not admit an almost complex structure.
\item[(b)] If $p>1$ is odd and $q$ is even, then $M\times N$ does not admit an almost complex structure. 
\item[(c)] If $p = 1$, then $M\times N = S^2\times N$ admits an almost complex structure if and only if $q = 1$, $q = 2$, or $q = 3$ and $N$ is spin${}^c$.
\end{enumerate}
\end{theorem}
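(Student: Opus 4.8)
The plan is to analyze each case using the index of the twisted spin${}^c$ Dirac operator applied to $X := M \times N$, exploiting that rational homology spheres have very sparse rational cohomology so that most characteristic classes are torsion. First I would note that $X$ has the rational cohomology of $S^{2p} \times S^{2q}$, so $H^*(X;\QQ)$ is generated by classes in degrees $0, 2p, 2q, 2p+2q$; in particular all Chern classes and Pontryagin classes of $TX$ in degrees other than these are torsion, and $H^{2p+2q}(X;\ZZ) \cong \ZZ$ (up to torsion) with the top class pairing with the fundamental class to give the Euler characteristic $\chi(X) = \chi(M)\chi(N) = 4$ when both $p,q \geq 1$ (since $\chi(M) = \chi(N) = 2$).

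\medskip

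\textbf{Cases (a) and (b).} Suppose $X$ admits an almost complex structure $J$. Then $TX$ carries a canonical spin${}^c$ structure and we may form $\slashed{\partial}^c_{TX}$ with index $\int_X \exp(c_1(TX)/2)\, ch(TX)\, \hat{A}(TX)$, which is an integer. I would argue that if $2p < 2q$ then, modulo torsion, only the degree-$(2p+2q)$ part contributes, and the computation reduces to a universal rational coefficient times $\chi(X) = 4$; the same reasoning applied to twisting by other natural bundles (e.g. $\Lambda^j TX$, or just the trivial twist giving the ordinary spin${}^c$ Dirac index, or higher exterior/symmetric powers) yields a family of integrality constraints. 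In case (a), where $p$ and $q$ are both even, I expect the cleanest route is to observe that $2p \equiv 0$ and $2q \equiv 0 \bmod 4$, so $X$ is a $4m$-manifold and one can invoke the Hirzebruch congruence $\chi(X) \equiv (-1)^m \sigma(X) \bmod 4$ already used in the introduction — but $\sigma(X) = 0$ since the middle cohomology of $X$ (in degree $2p+2q \equiv 0 \bmod 4$, but with $H^{2p}, H^{2q}$ the only nonzero intermediate groups and $2p \neq 2q$ generically, or even when $2p = 2q$ the cup product pairing is hyperbolic) vanishes or is hyperbolic, so $\sigma(X) = 0$ while $\chi(X) = 4 \not\equiv 0 \bmod 4$ fails — wait, $4 \equiv 0 \bmod 4$, so that particular congruence does not immediately obstruct. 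Hence in case (a) I would instead push the twisted Dirac index: the key is that $ch(TX)\hat{A}(TX)e^{c_1/2}$, modulo torsion, has its top component equal to $\frac{1}{(p+q-1)!}$ (or a similar explicit factorial from the $S^{2p}\times S^{2q}$ model combined with the rank contribution) times the top Chern class, forcing $(p+q-1)! \mid \chi(X) = 4$ or an analogous divisibility that fails once $p$ or $q$ is large and even; and for the remaining small even values one checks directly using the parity constraint that $c_1(TX)$ must be zero modulo torsion (as $H^2(X;\QQ) = 0$ when $p, q > 1$), which via $p_1 = c_1^2 - 2c_2$ type identities and the signature formula forces a contradiction with $\chi(X) = 4$. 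Case (b) is handled the same way, the only difference being that $2p \equiv 2 \bmod 4$, so one uses the twisted index with a twist that makes the relevant Todd/$\hat A$ contribution land in the right degree; since $p > 1$, we still have $H^2(X;\QQ) = 0$ so $c_1 \equiv 0$ modulo torsion, and the integrality of $\frac{1}{(p+q-1)!}\chi(X)$ (or the correct factorial) contradicts $\chi(X) = 4$ for all but finitely many $(p,q)$, which are then eliminated by hand.

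\medskip

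\textbf{Case (c).} When $p = 1$, $X = S^2 \times N$ with $N$ a $2q$-dimensional rational homology sphere. For the "if" direction: if $q = 1$ then $S^2 \times S^2$-cohomology and $N$ stably parallelisable gives $N = $ (homotopy) $S^2 = S^2$ and $S^2 \times S^2$ is Kähler; if $q = 2$, then $S^2 \times N$ is a six-manifold and one shows it is spin${}^c$ (using that $N^4$ is automatically spin${}^c$, as every oriented four-manifold is, hence $W_3(S^2 \times N) = 0$), and by Massey's theorem $W_3$ is the only obstruction in dimension six; if $q = 3$ and $N$ is spin${}^c$, then $S^2 \times N$ is an eight-manifold that is spin${}^c$, and here I would cite the relevant existence result — likely that an eight-manifold which is spin${}^c$ and has the right cohomology (here, rationally $S^2 \times S^6$) admits an almost complex structure, invoking a computation in the style of \cite{AM} showing the spin${}^c$ Dirac-type obstructions all vanish, together with the fact that $S^6$ admits an almost complex structure and the $S^2$ factor contributes freely. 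For the "only if" direction: I would run the twisted spin${}^c$ Dirac index argument again. With $p = 1$, $H^2(X;\QQ) = \QQ$ is nonzero (generated by the $S^2$ factor, call it $x$ with $x^2 = 0$), so $c_1(TX) = a x + (\text{torsion}) + (\text{degree } 2q \text{ or higher torsion})$ for some integer $a$, and by restricting to $S^2 \times \{pt\}$ one identifies $a$ with the degree of $c_1$ of an almost complex structure on $S^2$, which forces $a = \pm 2$ (up to torsion and orientation); then $e^{c_1/2} = e^{\pm x}$, $\hat A(TX) = 1$ mod torsion, and $ch(TX)$ mod torsion is $(1 + q + \text{rank stuff}) + \frac{1}{(q)!}(\text{top Chern class}) + (\text{the } 2q\text{-degree Chern class term})$. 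Computing $\operatorname{ind}(\slashed{\partial}^c_{TX})$ and its variants (twisting by powers of the $S^2$-line bundle, and by bundles pulled back from $N$) produces integrality conditions of the form $\frac{1}{q!}\chi(N) \cdot (\text{something}) \in \ZZ$, i.e. roughly $q! \mid 2$ after accounting for $\chi(N) = 2$, which fails for $q \geq 4$; the cases $q = 1, 2, 3$ survive, and for $q = 3$ the index computation additionally forces $N$ to be spin${}^c$ (the integrality fails for the non-spin${}^c$ choice of $\ZZ/2$-lift, exactly as in the sphere case in \cite{AM}).

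\medskip

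\textbf{Main obstacle.} The delicate point is bookkeeping the torsion and the "intermediate" degree-$2p$ and degree-$2q$ characteristic classes: these are not torsion when $p = 1$ (the $S^2$ direction), and even when they are torsion, I must be careful that products of torsion classes with the genuinely nonzero top class could a priori contribute to the top-degree term of the index integrand — they do not, since multiplying a torsion class into $H^{2p+2q}(X;\ZZ) \cong \ZZ$ lands in its torsion subgroup, but this needs to be said cleanly. The second real obstacle is pinning down the existence statements in case (c) for $q = 3$: asserting that every spin${}^c$ rational-homology $S^2 \times S^6$ is almost complex requires either an explicit construction or a full obstruction-theoretic computation over the $8$-skeleton, analogous to the one in \cite{AM} for $S^6$; I would structure the proof so that this is reduced to a citation of \cite{AM} plus a Künneth/naturality argument handling the extra $S^2$ factor.
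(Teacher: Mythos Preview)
Your plan correctly identifies the twisted spin${}^c$ Dirac index as the central tool, and for case~(b) this is exactly what the paper does. But there is a genuine missing ingredient that the index alone cannot supply: the constraint coming from the vanishing of the \emph{rational Pontryagin classes}. The paper begins by writing, in $H := H^*(M\times N;\ZZ)/\text{torsion}$, the total Chern class as $c = 1 + au + bv + 4uv$ and then uses the identity $c(T(M\times N))\cdot c(\overline{T(M\times N)}) = 1$ in $H$ (equivalent to all $p_i$ being torsion) to read off: $a = 0$ when $p$ is even, $b = 0$ when $q$ is even, and $ab = 4$ when $p, q$ are both odd. Without these relations your index computation does not close. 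Indeed, for $p, q > 1$ one has $e^{c_1/2} = \hat A = 1$ in $H$, and the Newton identities give
\[
\operatorname{ind}(\slashed{\partial}^c_{TX}) \;=\; \pm\,\frac{ab - 4}{(p+q-1)!}\,;
\]
if you do not know $ab$, then $ab = 4$ makes the index vanish identically and yields no obstruction at all. The paper's case~(a) is in fact settled \emph{entirely} by the relation $c\cdot\bar c = 1$ (the $uv$-coefficient comes out to $8 \neq 0$), with no index needed; your attempt via the Hirzebruch congruence correctly founders on $\chi = 4 \equiv 0 \bmod 4$, and your fallback to the index only gives $(p+q-1)! \mid 4$ once you already know $a = b = 0$. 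Your phrase ``$p_1 = c_1^2 - 2c_2$ type identities'' is pointing at exactly the right thing, but it needs to be used systematically as $c\bar c = 1$, not as an afterthought for residual small cases.

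Two further issues in case~(c). First, your claim that restricting to $S^2 \times \{\text{pt}\}$ forces $a = \pm 2$ is unjustified: the almost complex structure on $X$ restricts only to a complex structure on $TS^2 \oplus \RR^{2q}$ over $S^2$, not to one on $TS^2$, and the only immediate constraint is that $a$ is even. The paper never determines $a$; for $q$ odd it instead feeds $ab = 4$ (from $c\bar c = 1$) into the index to obtain $(q-1)! \mid 2$, hence $q = 3$. Second, for the existence direction when $q = 3$ and $N$ is spin${}^c$, no eight-dimensional obstruction theory is needed: a spin${}^c$ six-manifold is already almost complex, so $N$ itself is almost complex and hence so is the product $S^2 \times N$.
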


\begin{proof}
Note that the Pontryagin classes of rational homology spheres are all torsion (in dimensions divisible by four, the top Pontryagin class vanishes due to the signature being zero as argued in Section 2.1). Therefore all the Pontryagin classes of $M\times N$ are torsion, so if $M\times N$ is almost complex, then in $H := H^*(M\times N; \mathbb{Z})/H^*(M\times N; \mathbb{Z})_{\text{tors}}$ we have the equality $$1 = c(T(M\times N))c(\overline{T(M\times N)}).$$ 

Following \cite{DS}, let $u$ and $v$ be generators in $H$ of degree $2p$ and $2q$ respectively such that $c_{p+q}(T(M\times N)) = 4uv$. Then in $H$ we have $c(T(M\times N)) = 1 + au + bv + 4uv$ for some $a, b \in \mathbb{Z}$ and hence $c(\overline{T(M\times N)}) = 1 + (-1)^pau + (-1)^qbv + 4(-1)^{p+q}uv$ in $H$. 

So we see that in $H$ we have
\begin{align}
1 &= 1 + [1 + (-1)^p]au + [1+(-1)^q]bv + [4(-1)^{p+q} + (-1)^qab + (-1)^pab + 4]uv. \label{eq1}
\end{align}
It follows that if $p$ is even, then $a = 0$, and if $q$ is even, then $b = 0$. So if $p$ and $q$ are both even, we arrive at a contradiction as the coefficient of $uv$ is $8 \neq 0$; this establishes case (a).

In what follows, we will need to make use of the Chern character of $T(M\times N)$ which takes the form (in $H$) $$\operatorname{ch}(T(M\times N)) = (p + q) + \frac{s_p}{p!} + \frac{s_q}{q!} + \frac{s_{p+q}}{(p+q)!}$$ where the classes $s_i$ are determined by Newton's relation between elementary symmetric polynomials and power sums:
\begin{align}
s_n - c_1s_{n-1} + c_2s_{n-2} + \dots + (-1)^nnc_n &= 0.
\end{align}

If $p$ is odd and $q$ is even, then we have $b = 0$. For $p > 1$, the class $c_1(T(M\times N))$ is torsion, and hence, recalling that the Pontryagin classes of $T(M\times N)$ are also torsion, we have
\begin{align*}
\operatorname{ind}(\slashed{\partial}^c_{T(M\times N)}) &= \int_{M\times N}\operatorname{exp}(c_1(T(M\times N))/2)\operatorname{ch}(T(M\times N))\hat{A}(T(M\times N))\\ 
&= \int_{M\times N}\operatorname{ch}_{p+q}(T(M\times N))\ = \int_{M\times N}\frac{c_{p+q}}{(p + q - 1)!}  \\ &= \int_{M\times N}\frac{4}{(p + q - 1)!} uv
\end{align*}
where the penultimate equality follows from $(3.2)$. So $(p + q - 1)! \mid 4$, but as $p>1$ is odd and $q$ is even, we see that this is impossible; this establishes case (b).

Finally, we deal with case (c). If $q=1$, then $M\times N = S^2 \times S^2$; so suppose $q>1$. Modulo torsion, $c_1(T(M\times N)) = au$ for some $a \in \mathbb{Z}$. If $q$ is even, then $b = 0$, so
\begin{align*}
\operatorname{ind}(\slashed{\partial}^c_{T(M\times N)}) &= \int_{M\times N}\operatorname{exp}(c_1(T(M\times N))/2)\operatorname{ch}(T(M\times N))\hat{A}(T(M\times N))\\ 
&= \int_{M\times N}\left(1 + \frac{1}{2}au\right)\left(1+q + au + \frac{4}{q!}uv\right) = \int_{M\times N}\frac{4}{q!}uv
\end{align*}
where again we have used $(3.2)$ to establish the penultimate equality. Hence $q! \mid 4$, which is only possible for $q=2$. Since an orientable four--manifold is spin${}^c$, the product $S^2\times N$ is a spin${}^c$ six--manifold and hence almost complex for any four--dimensional rational homology sphere $N$.

Suppose now that $q>1$ is odd. Then, again using $(3.2)$, we have
\begin{align*}
\operatorname{ind}(\slashed{\partial}^c_{T(M\times N)}) &= \int_{M\times N}\operatorname{exp}(c_1(T(M\times N))/2)\operatorname{ch}(T(M\times N))\hat{A}(T(M\times N))\\ 
&= \int_{M\times N}\left(1 + \frac{1}{2}au\right)\left((1 + q) + au + \frac{b}{(q-1)!}v + \frac{4-ab}{q!}uv \right)\\
&= \int_{M\times N}\left( \frac{ab}{2(q-1)!}+\frac{4-ab}{q!}\right)uv
\end{align*}

From (\ref{eq1}), we see that $ab = 4$ and hence $(q-1)! \mid 2$, so $q=3$. As $S^2 \times N$ is almost complex, $N$ must be spin${}^c$; conversely, a six--dimensional spin${}^c$ manifold is almost complex and hence $S^2 \times N$ is almost complex for any six--dimensional spin${}^c$ rational homology sphere $N$. 
\end{proof}

\begin{remark}
It is worth noting that part (a) of the above theorem can be generalised to arbitrary products by the same argument. Alternatively, the fact that the Pontryagin classes and odd Chern classes are torsion implies that the even Chern classes are torsion, but this contradicts the fact that the top Chern class is the Euler class which is non-zero.
\end{remark}

The argument in \cite{DS} uses the fact that $\operatorname{ch}(E) \in H^{\text{even}}(S^{2p}\times S^{2q}; \mathbb{Z})$ for any complex vector bundle $E \to S^{2p}\times S^{2q}$, in particular $E = T(S^{2p}\times S^{2q})$. As in the proof above, one can deduce that $\operatorname{ch}_{p+q}(E) \in H^{2(p + q)}(S^{2p}\times S^{2q}; \mathbb{Z})$ from the index of the twisted spin${}^c$ Dirac operator $\slashed{\partial}^c_E$. However, the index theorem doesn't show that $\operatorname{ch}_p(E) \in H^{2p}(S^{2p}\times S^{2q}; \mathbb{Z})$ and $\operatorname{ch}_q(E) \in H^{2q}(S^{2p}\times S^{2q}; \mathbb{Z})$. These integrality statements are precisely what Datta and Subramanian exploit for the case we are missing, namely for odd $p,q>1$, in which case the above index is zero.

We conclude with the open question whether the following rational analogue of Datta and Subramanian's theorem is true:

\begin{?}
Let $M$ and $N$ be rational homology spheres with $\dim M = 2p$ and $\dim N = 2q$ where $p$ and $q$ are odd. If $M\times N$ admits an almost complex structure, is it necessarily the case that $\dim M, \dim N \in \{2, 6\}$?
\end{?}

\end{document}